%Corrections still needed are marked with %CORRECT:
%STEFFEN: Comments by Steffen marked with %STEFFEN:
%STEFFEN4: New February 2019 comments by Steffen marked with %STEFFEN4:
\documentclass{amsart}
\setlength\hfuzz{2pt}
\setlength\vfuzz{2pt}
\usepackage{amsmath,amsthm,amssymb,latexsym,verbatim,enumerate, mathrsfs,
enumitem, amsfonts, ifpdf}

\setcounter{tocdepth}{3}
\ifpdf
\usepackage{hyperref}
\else
\usepackage[hypertex]{hyperref}
\fi
%\renewcommand{\qedsymbol}{$\dashv$}
%STEFFEN: Dilip, I prefer not to change latex unless there is a really good
%reason since journals tend to want "clean" latex files. But I don't care
%that much...

\newcommand{\restrict}{\upharpoonright}%{\mathord{\upharpoonright}}

\newcommand{\bb}{\mathfrak{b}}
\newcommand{\dd}{{\mathfrak{d}}}

\newcommand{\inva}{{\mathfrak{a}}}

\renewcommand{\[}{\left[}
\renewcommand{\]}{\right]}

\newcommand{\PPP}{\mathcal{P}}

\newcommand{\lc}{\left|}
\newcommand{\rc}{\right|}

\newcommand{\<}{\prec}

\newcommand\CH{\mathrm{CH}}

\newcommand{\0}{\emptyset}
\DeclareMathOperator{\IniSeg}{IniSeg}

\DeclareMathOperator{\dom}{dom}

\DeclareMathOperator{\cf}{cf}

\newcommand{\Pset}{\mathcal{P}}

\newcommand{\CCC}{\mathcal{C}}

\newcommand{\pr}[2]{\left\langle #1, #2 \right\rangle}
\newcommand{\seq}[4]{\left\langle {#1}_{#2}: #2 #3 #4 \right\rangle}
\newcommand\w{\omega}

\newtheorem{thm}{Theorem}
\newtheorem{Claim}[thm]{Claim}
\newtheorem{cor}[thm]{Corollary}
\newtheorem{prop}[thm]{Proposition}
\newtheorem{lem}[thm]{Lemma}

\theoremstyle{definition}
\newtheorem{defn}[thm]{Definition}
\newtheorem{example}[thm]{Example}

\newtheorem{ques}[thm]{Question}

\numberwithin{thm}{section}
\usepackage{hyperref}
\begin{document}
\title[Order dimension]{On the order dimension of\\
locally countable partial orderings}
\author[Higuchi]{Kojiro Higuchi}
\address[Higuchi]{College of Engineering\\
  Nihon University\\
  1 Nakagawara, Tokusada, Tamuramachi, Koriyama\\
  Fukushima Prefecture 963-8642\\
  JAPAN}
\email{\href{mailto:higuchi.koujirou@nihon-u.ac.jp}%
  {higuchi.koujirou@nihon-u.ac.jp}}
\urladdr{\url{http://kenkyu-web.cin.nihon-u.ac.jp/%
Profiles/130/0012920/prof_e.html}}
\author[Lempp]{Steffen Lempp}
\address[Lempp]{Department of Mathematics\\
  University of Wisconsin\\
  Madison, Wisconsin 53706-1325\\
  USA}
\email{\href{mailto:lempp@math.wisc.edu}{lempp@math.wisc.edu}}
\urladdr{\url{http://www.math.wisc.edu/~lempp}}
\author[Raghavan]{Dilip Raghavan}
\address[Raghavan, Stephan]{Department of Mathematics\\
  National University of Singapore\\
  10 Lower Kent Ridge Road\\
  Singapore 119076, Republic of Singapore}
\email{\href{mailto:matrd@nus.edu.sg}{matrd@nus.edu.sg}}
\urladdr{\url{http://www.math.nus.edu.sg/~raghavan/}}
\author[Stephan]{Frank Stephan}
\email{\href{mailto:fstephan@comp.nus.edu.sg}{fstephan@comp.nus.edu.sg}}
\urladdr{\url{http://www.comp.nus.edu.sg/~fstephan/}}
\keywords{partial ordering, dimension, Turing degrees}
\subjclass[2010]{Primary: 06A06, 03E04; Secondary: 03D28}
\thanks{The first author was partially supported by Grant-in-Aid for JSPS
Fellows. The second author was partially supported by NSF grant DMS-160022;
he also wishes to thank the National University of Singapore for its
hospitality during his sabbatical year when most of this research was carried
out. The third author was partially supported by the Singapore Ministry of
Education's research grant number MOE2017-T2-2-125. The fourth author was
partially supported in part by the Singapore Ministry of Education Academic
Research Fund Tier 2 grant MOE2016-T2-1-019 / R146-000-234-112. }
\begin{abstract}
We show that the order dimension of the partial order of all finite subsets
of~$\kappa$ under set inclusion is~${\log}_{2}({\log}_{2}(\kappa))$
whenever~$\kappa$ is an infinite cardinal.

We also show that the order dimension of any locally countable partial
ordering~$(P, <)$ of size~$\kappa^+$, for any $\kappa$ of uncountable
cofinality, is at most~$\kappa$. In particular, this implies that it is
consistent with ZFC that the dimension of the Turing degrees under partial
ordering can be strictly less than the continuum.
\end{abstract}
\maketitle

\section{Introduction}

\noindent
This paper arose from a question posed by the first to the third author at
the Computability Theory and the Foundations of Mathematics conference at
Tokyo in 2015 regarding a set-theoretic question about a
computability-theoretic structure:

\begin{ques}[Higuchi]
What is the order dimension of the Turing degrees regarded as a partial
order?
\end{ques}

\noindent
Higuchi had already shown that this dimension must be uncountable and asked
whether it is the continuum. This paper provides a partial answer: It is
consistent with ZFC that the dimension is less than the continuum. But
Higuchi's question raised a number of related questions to which we give some
answers in this paper, all about the order dimension of locally finite and
locally countable partial orders.

We start with some definitions:

\begin{defn}[Dushnik, Miller~\cite{DushMill}, Ore~\cite{Ore}]
Given a partial order $\PPP = (P, \<)$, the \emph{order dimension} (or simply
\emph{dimension}) of~$\PPP$ is the smallest cardinality of a collection of
linearizations of~$\<$ which intersect to~$\<$.
\end{defn}

\noindent
So, for example, the dimension of a linear order is clearly~1, and the
dimension of an antichain is easily seen to be~2. It is also easy to see that
the dimension of an infinite partial order~$\PPP$ can be at most~$|P|$: For
each pair $x, y \in P$ with $y \not\preceq x$, fix a linearization~$<_{x,y}$
of~$\<$ with $x <_{x,y} y$.

\begin{defn}
Call a partial order $\PPP = (P, \<)$ \emph{locally finite} (\emph{locally
countable}, respectively), if for each $x \in P$, the set $\{y \in P \mid y
\< x\}$ is finite or countable, respectively.
\end{defn}

\noindent
Partial orders which are locally finite (or locally countable, respectively)
are also often said to have the \emph{finite predecessor property} (or the
\emph{countable predecessor property}).

The order dimension of the Turing degrees can be thought of as a new cardinal
invariant because it is between~${\aleph}_{1}$ and~${2}^{{\aleph}_{0}}$.

\begin{defn} \label{def:diminv}
Let $\pr{\mathcal{D}}{{<}_{T}}$ denote the class of Turing degrees equipped
with the ordering of Turing reducibility. The cardinal ${\mathfrak{dim}}_{T}$
denotes the order dimension of $\pr{\mathcal{D}}{{<}_{T}}$.
\end{defn}

\noindent
Since $\mathcal{D}$ has cardinality ${2}^{{\aleph}_{0}}$,
${\mathfrak{dim}}_{T} \leq {2}^{{\aleph}_{0}}$, and by Higuchi's
Proposition~\ref{prop:higuchi}, ${\aleph}_{1} \leq {\mathfrak{dim}}_{T}$.
Thus the cardinal ${\mathfrak{dim}}_{T}$ sits between~${\aleph}_{1}$
and~${2}^{{\aleph}_{0}}$, like many of the standard cardinal invariants of
the continuum such as~$\bb$, $\dd$,~$\inva$, etc. The reader is referred to
Blass~\cite{blasssmall} for a general survey of combinatorial cardinal
characteristics of the continuum. Of course, under $\CH$,
${\mathfrak{dim}}_{T} = {\aleph}_{1} = {2}^{{\aleph}_{0}}$. In this paper, we
will show that ${\mathfrak{dim}}_{T}$ is smaller than ${2}^{{\aleph}_{0}}$
``most of the time''. More precisely, we will show that there are only three
circumstances under which ${\mathfrak{dim}}_{T} = {2}^{{\aleph}_{0}}$ is
possibly consistent: ${2}^{{\aleph}_{0}} = {\aleph}_{1}$, or
${2}^{{\aleph}_{0}}$ is a limit cardinal (either singular or weakly
inaccessible), or ${2}^{{\aleph}_{0}}$ is the successor of a singular
cardinal of countable cofinality.

We will now present some results on the dimension of locally finite partial
orders in Section~\ref{sec:locfin}, and on the dimension of locally countable
partial orders in Section~\ref{sec:loccoun}, and close with some results on
the dimension of degree structures from computability theory in
Section~\ref{sec:degstr}.

\section{\texorpdfstring{The dimension of ${\[\kappa\]}^{<\omega}$}%
{The dimension of the collection of finite subsets of kappa}}%
\label{sec:locfin}

\noindent
We determine the order dimension of $\pr{{\[\kappa\]}^{<
\omega}}{\subsetneq}$, which is universal among locally finite posets of
cardinality~$\kappa$, i.e., every locally finite poset $\PPP = (P, \<)$ with
$|P|=\kappa$ embeds into $\pr{{\[\kappa\]}^{< \omega}}{\subsetneq}$ by
assigning $a\in P$ to $\{f(b):b\in P, b\preceq a\}$, where $f:P\to \kappa$ is
any injection. Thus our result in this section provides an upper bound for
every locally finite poset.

\begin{defn} \label{def:log}
For any infinite cardinal~$\kappa$,
\begin{align*}
{\log}_{2}(\kappa) = \min\{\lambda: {2}^{\lambda} \geq \kappa\}
\end{align*}
\end{defn}

\noindent
The main theorem of this section is the following.

\begin{thm} \label{thm:doublelog}
Let $\kappa \geq \omega$ be any cardinal. Then the order dimension of
$\pr{{\[\kappa\]}^{< \omega}}{\subsetneq}$ is
${\log}_{2}({\log}_{2}(\kappa))$.
\end{thm}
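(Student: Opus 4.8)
The plan is to prove the two inequalities separately, writing $\mu = \log_2(\log_2(\kappa))$ throughout and using the standard reformulation of dimension: a family $\{L_t : t \in T\}$ of linearizations of $\subsetneq$ realizes $\subsetneq$ (intersects to it) exactly when, for every pair $a,b \in [\kappa]^{<\omega}$ that is $\subsetneq$-incomparable, some $L_t$ has $a <_t b$ and some $L_{t'}$ has $b <_{t'} a$. Since $a,b$ are incomparable precisely when both $a \setminus b$ and $b \setminus a$ are nonempty, realizing such pairs is really a statement about separating elements of the finite set $a \triangle b$, and this finiteness is what will let the ground set $\kappa$ be pushed down by two exponentials.

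For the upper bound I would first reduce to a statement purely about linear orders of the ground set $\kappa$. Given any linear order $\vartriangleleft$ on $\kappa$, define $L_\vartriangleleft$ on $[\kappa]^{<\omega}$ by declaring $a <_\vartriangleleft b$ iff the $\vartriangleleft$-largest element of $a \triangle b$ belongs to $b$. One checks that $L_\vartriangleleft$ is a linear order extending $\subsetneq$ (if $a \subsetneq b$ then $a \triangle b = b \setminus a \subseteq b$). The key reformulation is then: a family $\{\vartriangleleft_t : t \in T\}$ of linear orders of $\kappa$ yields, via $a \mapsto L_{\vartriangleleft_t}$, a realizer of $\subsetneq$ as soon as it satisfies $(\ast)$, namely that \emph{for every finite $F \subseteq \kappa$ and every $x \in F$, some $\vartriangleleft_t$ makes $x$ the $\vartriangleleft_t$-largest element of $F$}. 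Indeed, given incomparable $a,b$, applying $(\ast)$ to $F = a \triangle b$ with a chosen $x \in b \setminus a$ reverses the pair one way, and with $x \in a \setminus b$ the other. Thus it suffices to produce a family of size $\mu$ of linear orders of $\kappa$ satisfying $(\ast)$.

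To build such a family I would exploit that $(\ast)$ is a finite-intersection-property statement: setting $S(x,y) = \{t : x \vartriangleleft_t y\}$, condition $(\ast)$ says exactly that for each fixed $x$ the family $\{S(y,x) : y \neq x\}$ has the finite intersection property (some coordinate places $x$ above every member of a given finite set). This is where ultrafilters enter. Since $\kappa \le 2^{2^\mu}$ and there are $2^{2^\mu}$ ultrafilters on a set of size $\mu$, I can injectively attach to each $\alpha < \kappa$ a distinct ultrafilter $U_\alpha$ on $\mu$, and aim to define orders $\vartriangleleft_t$ ($t < \mu$) so that $S(\beta,\alpha) \in U_\alpha$ for all $\beta \neq \alpha$; then $\{S(\beta,\alpha) : \beta \neq \alpha\} \subseteq U_\alpha$ automatically has the finite intersection property, giving $(\ast)$. \textbf{This construction is the main obstacle on the upper-bound side}: one must choose, uniformly in the coordinate $t$, insertion positions for the points so that every $\vartriangleleft_t$ stays transitive \emph{and} each $S(\beta,\alpha)$ is $U_\alpha$-large (equivalently $S(\alpha,\beta) \notin U_\alpha$). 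Because distinct ultrafilters are incomparable as subsets of $\Pset(\mu)$, the required sets $S(\beta,\alpha) \in U_\alpha \setminus U_\beta$ always exist pairwise; the delicate point is making these pairwise choices cohere into genuine linear orders at each coordinate, e.g.\ by a transfinite recursion that inserts the $\alpha$'s one at a time while respecting every previously fixed order.

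For the lower bound I would run the same ultrafilter count in reverse. Given any realizer $\{L_t : t \in T\}$ with $|T| = \lambda$, the goal is to assign to each $\alpha < \kappa$ a distinct ultrafilter $U_\alpha$ on $T$, forcing $\kappa \le 2^{2^\lambda}$ and hence $\lambda \ge \mu$. Injectivity is automatic once $U_\alpha$ contains all the witnessing sets $S(\beta,\alpha) = \{t : \{\beta\} <_t \{\alpha\}\}$, since a set and its complement cannot both lie in an ultrafilter. \textbf{The crux of the lower bound is to manufacture this ultrafilter}: the realizer axioms only guarantee that individual incomparable pairs are reversed, not that a single $L_t$ simultaneously places one element below many others, so the needed finite intersection property cannot be read off from singletons alone. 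The plan is to feed in finite subsets of $\kappa$ of unbounded size — comparing $\{\alpha\}$, or small sets containing $\alpha$, against finite sets avoiding $\alpha$ — so that the reversals forced by the realizer collectively exhibit the finite intersection property required to extend to an ultrafilter $U_\alpha$. Matching the two bounds then yields $\dim([\kappa]^{<\omega},\subsetneq) = \mu$.
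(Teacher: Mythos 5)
Your lower bound is a genuinely different argument from the paper's and it does work; in fact the step you flag as ``the crux'' is immediate. Given a realizer $\{L_t : t\in T\}$ and $\alpha<\kappa$, for any finite $G\subseteq\kappa\setminus\{\alpha\}$ the pair $(G,\{\alpha\})$ has $\{\alpha\}\not\subseteq G$, so some $L_t$ puts $G <_t \{\alpha\}$; since each $L_t$ extends $\subsetneq$ and $\{\beta\}\subseteq G$ for $\beta\in G$, transitivity gives $\{\beta\}\le_t G <_t\{\alpha\}$ for all $\beta\in G$. Hence $\{S(\beta,\alpha):\beta\ne\alpha\}$ has the finite intersection property and extends to an ultrafilter $\mathcal{U}_\alpha$ on $T$; distinctness follows since $S(\beta,\alpha)\in\mathcal{U}_\alpha$ while its complement $S(\alpha,\beta)\in\mathcal{U}_\beta$. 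This yields $\kappa\le 2^{2^{|T|}}$ (and rules out finite $T$ outright, since a finite set carries only finitely many ultrafilters), so $|T|\ge\log_2(\log_2(\kappa))$. The paper instead derives the lower bound from the Erd\H{o}s--Rado relation $(2^{2^\rho})^+\to(\rho^+)^3_\rho$ applied to a colouring of triples by the index of a reversing linearization; your route avoids partition calculus entirely and is arguably more elementary. You should, however, write out the one-line verification above rather than leaving it as a ``plan''.

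The upper bound is where the genuine gap lies, and it sits exactly at the spot you flag. Your reduction to condition $(\ast)$ --- find $\mu$ linear orders $\vartriangleleft_t$ of $\kappa$ such that every element of every finite set is $\vartriangleleft_t$-maximal in it for some $t$ --- is correct (the symmetric-difference order $L_\vartriangleleft$ is indeed a linearization of $\subsetneq$, by the usual ``binary numeral'' argument). But the proposed construction of the $\vartriangleleft_t$ --- prescribe arbitrary distinct ultrafilters $\mathcal{U}_\alpha$ on $\mu$ and insert points one at a time so that $S(\beta,\alpha)\in\mathcal{U}_\alpha$ --- is not carried out and is unlikely to be salvageable as stated: when inserting $\alpha$, the choice of a cut in each coordinate $t$ must simultaneously satisfy a largeness condition with respect to $\mathcal{U}_\alpha$ \emph{and}, for each of up to $\kappa$ previously inserted $\beta$, a largeness condition with respect to the different ultrafilter $\mathcal{U}_\beta$; nothing in your sketch controls this. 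The missing ingredient is Hausdorff's independent family: fix distinct $f_\alpha\in{}^{\lambda}2$ ($\lambda=2^\mu$, $\kappa\le 2^\lambda$) and an independent family $\langle X_\xi:\xi<\lambda\rangle$ of subsets of $\mu$, set $g_t(\xi)=0$ iff $t\in X_\xi$, and let $\vartriangleleft_t$ be the ``lexicographic'' order in which $\beta\vartriangleleft_t\alpha$ iff $f_\alpha$ agrees with $g_t$ at the first coordinate where $f_\alpha$ and $f_\beta$ differ. Each $\vartriangleleft_t$ is then a linear order, and independence delivers $(\ast)$: given $\alpha$ and finite $G$, choose $t$ in $\bigcap_{\beta\in G}X^{f_\alpha(\Delta(f_\alpha,f_\beta))}_{\Delta(f_\alpha,f_\beta)}$. (The ultrafilters that arise this way are exactly those generated by the independent family, which is why ``any distinct ultrafilters'' cannot be the right hypothesis.) This is, in different packaging, precisely the construction in the paper's proof of Lemma~\ref{lem:upperbound}, which defines a partial order on $[\kappa]^{<\omega}$ directly from the $f_\alpha$, $g_\iota$ and $\Delta$ and then linearizes; your reduction to orders of the ground set is a clean reorganization, but without the independent family the upper bound is not proved.
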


\noindent
It follows from Definition~\ref{def:log} that
${\log}_{2}({\log}_{2}(\kappa))$ is the minimal~$\lambda$ such that
${2}^{{2}^{\lambda}} \geq \kappa$. Theorem~\ref{thm:doublelog} says in
particular that $\pr{{\[{\omega}_{1}\]}^{< \omega}}{\subsetneq}$ and indeed
$\pr{{\[{2}^{{2}^{\omega}}\]}^{< \omega}}{\subsetneq}$ have countable
dimension. But $\pr{{\[{\left({2}^{{2}^{\omega}} \right)}^{+}\]}^{<
\omega}}{\subsetneq}$ has uncountable dimension. Theorem~\ref{thm:doublelog}
will be proved via two lemmas that establish upper and lower bounds on the
dimension of $\pr{{\[\kappa\]}^{< \omega}}{\subsetneq}$. The lower bound is
established using a result from partition calculus. Recall the following
consequence of the well-known Erd{\H o}s-Rado theorem.

\begin{thm}[Corollary 17.11 in~\cite{partitionbible}] \label{thm:erdosrado}
For any cardinal $\rho$
and any nonzero $r < \omega$,
\begin{align*}
{\left({\exp}_{r - 1}(\rho)\right)}^{+} \rightarrow
   {\left({\rho}^{+}\right)}^{r}_{\rho}.
\end{align*}
\end{thm}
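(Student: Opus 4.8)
The statement is the Erd\H{o}s--Rado partition theorem, and I would prove it by induction on~$r$, the crucial device being the notion of an \emph{end-homogeneous} set. Throughout, fix an infinite~$\rho$ (the finite case reduces to the ordinary finite Ramsey theorem and may be handled separately), and abbreviate $\lambda = {\exp}_{r-1}(\rho)$ and $\kappa = {\lambda}^{+}$. The base case $r = 1$ is immediate: a partition of ${\rho}^{+}$ into $\rho$ pieces must, by the regularity of the successor cardinal ${\rho}^{+}$, have a piece of size ${\rho}^{+}$. For the inductive step I assume the relation for $r - 1$ and set $\mu = {({\exp}_{r-2}(\rho))}^{+}$, noting $\mu \le \lambda < \kappa$.

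Given a coloring $c : {[\kappa]}^{r} \to \rho$, the plan is first to extract a set $B$ of order type~$\mu$ that is \emph{end-homogeneous}, meaning that $c(s \cup \{x\})$ depends only on~$s$ and not on~$x$ whenever $s \in {[B]}^{r-1}$ and $x \in B$ with $x > \max s$. I would build this by a \emph{ramification} (tree) construction: set ${A}_{\emptyset} = \kappa$; at a node $s$ with ${A}_{s} \ne \emptyset$ let ${a}_{s} = \min {A}_{s}$ and split ${A}_{s} \setminus \{{a}_{s}\}$ according to the \emph{type} of each~$b$ over the ordinals ${P}_{s}$ already chosen along the branch to~$s$, where the type of~$b$ is the function $u \mapsto c(u \cup \{b\})$ on ${[{P}_{s} \cup \{{a}_{s}\}]}^{r-1}$; the children of~$s$ are the nonempty type classes. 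Two facts drive the argument. First, the map $s \mapsto {a}_{s}$ is injective and exhausts~$\kappa$ (following any $\gamma$ downward, the chosen minima strictly increase below~$\gamma$, so $\gamma$ is eventually some~${a}_{s}$), whence the tree has exactly $\kappa$ nodes. Second, a node of length $< \mu$ has at most $\rho^{{\exp}_{r-2}(\rho)} = \lambda$ children, by the routine computation $\rho^{{\exp}_{r-2}(\rho)} = {2}^{{\exp}_{r-2}(\rho)} = {\exp}_{r-1}(\rho)$. Were every branch of length~$< \mu$, the tree would embed into ${}^{<\mu}\lambda$ and hence have at most ${\lambda}^{< \mu} = \lambda < \kappa$ nodes, a contradiction; so some branch has length~$\mu$, and reading off its minima ${\langle {a}_{\xi} : \xi < \mu\rangle}$ yields the desired~$B$, end-homogeneous because any two points of~$B$ lying past $\max s$ pass through a common type class at the node just beyond $\max s$.

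With $B$ in hand the induction closes quickly: define ${c}' : {[B]}^{r-1} \to \rho$ by ${c}'(s) = c(s \cup \{x\})$ for any $x \in B$ above $\max s$, which is well defined by end-homogeneity. Since $|B| = \mu = {({\exp}_{r-2}(\rho))}^{+}$, the inductive hypothesis ${({\exp}_{r-2}(\rho))}^{+} \rightarrow {({\rho}^{+})}^{r-1}_{\rho}$ furnishes a ${c}'$-homogeneous $H \subseteq B$ of size ${\rho}^{+}$, and $H$ is then $c$-homogeneous for the same color. The one point demanding genuine care is the management of limit stages: a naive construction trying to preserve a reservoir of full size~$\kappa$ across limits would fail, since a chosen type class can shrink at a limit. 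The virtue of the ramification approach---and the step I expect to be the real crux---is precisely that it sidesteps this by replacing ``large at every stage'' with the single global count $|T| = \kappa > \lambda$, from which a long branch is forced automatically; verifying the supporting cardinal arithmetic (${\lambda}^{<\mu} = \lambda$ and the type count) is then routine.
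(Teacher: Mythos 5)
There is nothing in the paper to compare against here: the paper states this result as an external citation (Corollary 17.11 of Erd\H{o}s--Hajnal--M\'at\'e--Rado~\cite{partitionbible}) and gives no proof of it. Your argument is the canonical ramification proof of the Erd\H{o}s--Rado theorem, and it is correct: the cardinal arithmetic checks out ($\mu = ({\exp}_{r-2}(\rho))^{+} \leq 2^{{\exp}_{r-2}(\rho)} = \lambda$ by Cantor; a node of length $\delta < \mu$ has at most $\rho^{{\exp}_{r-2}(\rho)} = 2^{{\exp}_{r-2}(\rho)} = \lambda$ type classes; $\lambda^{<\mu} = \lambda$ since $\lambda^{|\delta|} = 2^{{\exp}_{r-2}(\rho) \cdot |\delta|} = \lambda$ for $\delta < \mu$); the exhaustion argument for $s \mapsto a_{s}$ is right (the minima along the branch through a fixed $\gamma$ are strictly increasing and bounded by $\gamma$, so $\gamma$ is eventually chosen); and the end-homogeneity verification is exactly as it should be, since for $s \subseteq P_{t} \cup \{a_{t}\}$ with $a_{t} = \max s$, any two later points of the branch lie in the same type class over $(r-1)$-subsets of $P_{t} \cup \{a_{t}\}$. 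You also correctly identify the limit-stage issue and why the global node count sidesteps it.

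One genuine slip, though inessential to how the paper uses the theorem: your parenthetical claim that the finite-$\rho$ case ``reduces to the ordinary finite Ramsey theorem'' cannot be right, because for finite $\rho$ the displayed relation is simply false as literally read. For instance, with $r = 2$ and $\rho = 2$ it would assert $5 \rightarrow (3)^{2}_{2}$, whereas the Ramsey number $R(3,3) = 6$; the tower ${\exp}_{r-1}(\rho)$ is far too small for finite Ramsey numbers. The statement must be read with $\rho$ infinite (as in the cited source), and the paper itself is consistent with this: in the proof of Lemma~\ref{lem:lowerbound} it invokes Ramsey's theorem \emph{separately} when $\rho$ is finite, using Theorem~\ref{thm:erdosrado} only for infinite $\rho$. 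So your restriction to infinite $\rho$ is not a convenience but a necessity, and the finite case needs Ramsey's theorem with no bound of the form stated, not a ``reduction'' preserving the displayed relation.
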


\noindent
The ${\exp}_{i}(\rho)$ operation is defined by recursion on $i < \omega$:
${\exp}_{0}(\rho) = \rho$; ${\exp}_{i + 1}(\rho) = {2}^{{\exp}_{i}(\rho)}$.
Setting $r = 3$ in Theorem~\ref{thm:erdosrado} gives ${\left(
{2}^{{2}^{\rho}} \right)}^{+} \rightarrow {\left( {\rho}^{+}
\right)}^{3}_{\rho}$.

\begin{lem} \label{lem:lowerbound}
Let $\kappa \geq \omega$ be a cardinal. Then the order dimension of
$\pr{{\[\kappa\]}^{< \omega}}{\subsetneq}$ is at least
${\log}_{2}({\log}_{2}(\kappa))$.
\end{lem}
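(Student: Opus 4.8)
The plan is to prove the contrapositive: for every cardinal $\rho < {\log}_{2}({\log}_{2}(\kappa))$, no family of $\rho$ linearizations of $\subsetneq$ can intersect to $\subsetneq$. This shows the dimension exceeds every such $\rho$ and hence is at least ${\log}_{2}({\log}_{2}(\kappa))$. So I would fix such a $\rho$ together with linearizations $<_i$ of $\subsetneq$ for $i < \rho$, and aim to produce a $\subsetneq$-incomparable pair that is ordered the same way by all of them. Since ${\log}_{2}({\log}_{2}(\kappa)) = \min\{\mu : {2}^{{2}^{\mu}} \geq \kappa\}$, the inequality $\rho < {\log}_{2}({\log}_{2}(\kappa))$ gives ${2}^{{2}^{\rho}} < \kappa$, so $\kappa \geq {\left({2}^{{2}^{\rho}}\right)}^{+}$, which is exactly the hypothesis of the $r = 3$ instance of Theorem~\ref{thm:erdosrado}. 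The key observation driving everything is the following: for distinct $\xi_0, \xi_1, \xi_2 < \kappa$, the singleton $\{\xi_1\}$ and the pair $\{\xi_0, \xi_2\}$ are $\subsetneq$-incomparable, and if for \emph{every} $i < \rho$ one has $\{\xi_1\} <_i \{\xi_0\}$ or $\{\xi_1\} <_i \{\xi_2\}$, then, because each $<_i$ extends $\subsetneq$ and hence satisfies $\{\xi_0\} <_i \{\xi_0,\xi_2\}$ and $\{\xi_2\} <_i \{\xi_0,\xi_2\}$, it follows that $\{\xi_1\} <_i \{\xi_0,\xi_2\}$ for every $i$; this un-reversed incomparable pair contradicts the family intersecting to $\subsetneq$. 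Consequently, \emph{if} the family realizes $\subsetneq$, then for every triple $\xi_0 < \xi_1 < \xi_2$ there is at least one $i$ for which the ordinal-wise middle singleton $\{\xi_1\}$ is the $<_i$-largest of the three singletons $\{\xi_0\}, \{\xi_1\}, \{\xi_2\}$.

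Granting this, I would define a coloring $c : {\[\kappa\]}^{3} \to \rho$ by letting $c(\{\xi_0 < \xi_1 < \xi_2\})$ be the least $i$ witnessing the displayed property, which is well defined by the previous paragraph. Applying the $r = 3$ case of Theorem~\ref{thm:erdosrado} together with $\kappa \geq {\left({2}^{{2}^{\rho}}\right)}^{+}$ yields a $c$-homogeneous set $H \subseteq \kappa$ of size ${\rho}^{+}$, say of color $i_0$; thus for every triple drawn from $H$ its middle element is the $<_{i_0}$-largest of the corresponding three singletons. Choosing $a < b < c < d$ in $H$, homogeneity on $\{a,b,c\}$ forces $\{c\} <_{i_0} \{b\}$ (as $b$ is the middle element there), while homogeneity on $\{b,c,d\}$ forces $\{b\} <_{i_0} \{c\}$ (as $c$ is the middle element there), a contradiction. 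Hence no realizing family of size $\rho$ exists. When $\rho$ is finite the Erdős--Rado bound degenerates, so there I would instead invoke the infinite Ramsey theorem (the ground set $\kappa$ is infinite and the coloring uses finitely many colors) to obtain an infinite, hence size-$\geq 4$, homogeneous set; equivalently one may note that all finite standard examples embed into $\pr{{\[\kappa\]}^{< \omega}}{\subsetneq}$, forcing infinite dimension outright.

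I expect the main obstacle to be the setup of the previous paragraphs rather than any computation: namely, identifying the correct incomparable pairs (a singleton against the pair formed by the two outer points of a triple) and the correct $\rho$-valued coloring so that a single homogeneous index $i_0$ is compelled to rank the middle singleton highest in two \emph{overlapping} triples simultaneously, which is what produces the clash $\{b\} <_{i_0} \{c\}$ versus $\{c\} <_{i_0} \{b\}$. Everything else is bookkeeping, and it is worth emphasizing that the jump from a single exponential (the basic Erdős--Rado relation for pairs) to the double exponential ${2}^{{2}^{\rho}}$ available for triples is precisely what calibrates the bound to the target ${\log}_{2}({\log}_{2}(\kappa))$: pairs of singletons alone can be arbitrarily reversed by one linearization and give nothing, whereas the three-point analysis converts homogeneity into an outright contradiction.
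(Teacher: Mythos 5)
Your proposal is correct and follows essentially the same route as the paper: the same observation that $\{\xi_1\}\not\subseteq\{\xi_0,\xi_2\}$ forces some linearization to place $\{\xi_0,\xi_2\}$ (hence both outer singletons) below $\{\xi_1\}$, the same $\rho$-coloring of triples, the same $r=3$ Erd\H{o}s--Rado application calibrated by $2^{2^\rho}<\kappa$, and the same four-point shift contradiction (the paper phrases the final clash as a cycle through the two-element sets, you phrase it directly on the singletons $\{b\},\{c\}$ --- a cosmetic difference only). The finite-$\rho$ case via Ramsey is handled the same way in both.
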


\begin{proof}
Let~$\rho$ denote the order dimension of $\pr{{\[\kappa\]}^{<
\omega}}{\subsetneq}$. Suppose for a contradiction that $\rho <
{\log}_{2}({\log}_{2}(\kappa))$. In other words, ${2}^{{2}^{\rho}} < \kappa$,
and so $\theta = \max\{\omega,{\left( {2}^{{2}^{\rho}} \right)}^{+}\} \leq
\kappa$. Let $\seq{<}{\iota}{<}{\rho}$ be a sequence of linear orders
witnessing that $\pr{{\[\kappa\]}^{< \omega}}{\subsetneq}$ has
dimension~$\rho$. Thus for all $X, Y \in {\[\kappa\]}^{< \omega}$, if $X
\subsetneq Y$, then $\forall \iota < \rho\[X \: {<}_{\iota} \: Y\]$, while if
$X \not\subseteq Y$, then $\exists \iota < \rho\[Y \: {<}_{\iota} \: X\]$.
Now define a coloring $c: {\[\theta\]}^{3} \rightarrow \rho$ as follows.
Given any $\{\alpha, \beta, \gamma\} \in {\[\theta\]}^{3}$ with $\alpha <
\beta < \gamma$, there exists $\iota < \rho$ such that $\{\alpha, \gamma\} \:
{<}_{\iota} \: \{\beta\}$ because $\{\alpha, \gamma\}, \{\beta\} \in
{\[\kappa\]}^{< \omega}$ and $\{\beta\} \not\subseteq \{\alpha, \gamma\}$.
Define $c(\{\alpha, \beta, \gamma\}) = \min\{\iota < \rho: \{\alpha, \gamma\}
\: {<}_{\iota} \: \{\beta\} \}$. Using Ramsey's Theorem when $\rho$ is finite
and Theorem~\ref{thm:erdosrado} otherwise, fix $\alpha < \beta < \gamma <
\delta < \theta \leq \kappa$ and $\iota < \rho$ so that $\{\alpha, \gamma\}
\: {<}_{\iota} \: \{\beta\}$ and $\{\beta, \delta\} \: {<}_{\iota} \:
\{\gamma\}$. However, we now have $\{\gamma\} \: {<}_{\iota} \: \{\alpha,
\gamma\} \: {<}_{\iota} \: \{\beta\} \: {<}_{\iota} \: \{\beta, \delta\} \:
{<}_{\iota} \: \{\gamma\}$, which is impossible. This contradiction completes
the proof of the lemma.
\end{proof}

\noindent
Note that the proof of Lemma~\ref{lem:lowerbound} shows that
$\pr{{\[\kappa\]}^{\leq 2}}{\subsetneq}$ has order dimension at least
${\log}_{2}({\log}_{2}(\kappa))$. Also the proof only needed the partition
relation ${\left( {2}^{{2}^{\rho}} \right)}^{+} \rightarrow {\left( 4
\right)}^{3}_{\rho}$, and $\omega \rightarrow {\left( 4
\right)}^{3}_{\rho}$ when $\rho<\omega$ (actually something even weaker; just
a monochromatic shift is needed).

\begin{defn} \label{def:indep}
Let $\theta \geq \omega$ be a cardinal. For $X \subseteq \theta$, we use
${X}^{0}$ to denote $X$ and ${X}^{1}$ to denote $\theta \setminus X$. We say
that a sequence $\seq{X}{i}{\in}{I}$ of subsets of~$\theta$ is
\emph{independent} if for each nonempty finite set $F \subseteq I$ and for
each function $\sigma: F \rightarrow 2$, the set ${\bigcap}_{i \in
F}{{X}^{\sigma(i)}_{i}}$ has cardinality~$\theta$.
\end{defn}

\begin{thm}[Hausdorff; see Exercise (A6) in Chapter VIII of~\cite{Kunen}]%
\label{thm:indep} For each cardinal $\theta \geq \omega$, there is an
independent family $\seq{X}{\alpha}{<}{{2}^{\theta}}$ of subsets of~$\theta$.
\end{thm}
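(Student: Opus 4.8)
The plan is to build the independent family not directly on~$\theta$ but on an auxiliary index set of the same cardinality whose elements encode finite amounts of information about subsets of~$\theta$. Specifically, I would let $S$ be the set of all pairs $(s, \mathcal{A})$ with $s \in {\[\theta\]}^{<\omega}$ and $\mathcal{A} \subseteq \cP(s)$. Since $\theta \geq \omega$, there are exactly~$\theta$ finite subsets of~$\theta$, and each such~$s$ admits only finitely many families $\mathcal{A} \subseteq \cP(s)$, so $|S| = \theta$; fixing a bijection between~$S$ and~$\theta$ at the very end will transport everything back to~$\theta$ and produce a family of subsets of~$\theta$ as the statement demands.

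For each $X \subseteq \theta$ I would define $Y_X = \{(s, \mathcal{A}) \in S : X \cap s \in \mathcal{A}\}$, the set of pairs whose family~$\mathcal{A}$ happens to contain the trace of~$X$ on~$s$. The family to be shown independent is $\{Y_X : X \subseteq \theta\}$, naturally indexed by the $2^{\theta}$ subsets of~$\theta$, which after re-indexing along a bijection with~$2^\theta$ gives the desired $\seq{X}{\alpha}{<}{{2}^{\theta}}$. First I would observe that $X \mapsto Y_X$ is injective, so that we really obtain $2^\theta$ distinct sets: if $X \neq X'$, pick $\xi$ in their symmetric difference, put $s = \{\xi\}$ and $\mathcal{A} = \{X \cap s\}$, and note $(s,\mathcal{A})$ lies in exactly one of $Y_X, Y_{X'}$. (This also follows a posteriori from the independence established below.)

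The core is the independence. Given pairwise distinct $X_1, \dots, X_n \subseteq \theta$ and $\sigma : \{1,\dots,n\} \to 2$, I must show $\bigcap_{i} Y_{X_i}^{\sigma(i)}$ has cardinality~$\theta$, where $Y^0 = Y$ and $Y^1 = S \setminus Y$. The key step is a separation argument: because the $X_i$ are pairwise distinct, for each pair $i \neq j$ I can fix an element of~$\theta$ witnessing $X_i \neq X_j$, and collecting these finitely many witnesses yields a finite $s_0 \subseteq \theta$ on which the traces $X_1 \cap s_0, \dots, X_n \cap s_0$ are already pairwise distinct. Then for \emph{every} finite $s \supseteq s_0$ the traces $X_i \cap s$ remain pairwise distinct, so I may set $\mathcal{A}_s = \{X_i \cap s : \sigma(i) = 0\}$; distinctness guarantees that no $X_i \cap s$ with $\sigma(i) = 1$ accidentally lands in~$\mathcal{A}_s$, whence $(s, \mathcal{A}_s) \in Y_{X_i}^{\sigma(i)}$ for all~$i$ simultaneously. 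Since there are $\theta$ many finite $s \supseteq s_0$, the intersection has cardinality~$\theta$, as required.

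The main obstacle is precisely this separation step: one must produce a single finite set~$s_0$ that distinguishes all~$n$ chosen subsets through their traces at once, after which the freedom in choosing~$\mathcal{A}$ makes membership in the prescribed Boolean combination automatic. Everything else — the computation $|S| = \theta$ and the transport of the family along a bijection $S \cong \theta$ — is routine.
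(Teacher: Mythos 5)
Your proof is correct. The paper does not prove this theorem at all --- it simply cites it as an exercise in Kunen --- so there is nothing to compare against; what you have written is the standard Hausdorff (Fichtenholz--Kantorovich) construction, and every step checks out: $|S|=\theta$ since each finite $s$ carries only finitely many $\mathcal{A}\subseteq\mathcal{P}(s)$; the separation step (a single finite $s_0$ on which the traces of $X_1,\dots,X_n$ are pairwise distinct) is exactly the right pivot; and the $\theta$ many extensions $s\supseteq s_0$ give the required cardinality of each Boolean combination. It is worth noting that the same coding device --- pairs $\langle s,H\rangle$ with $s$ a finite set and $H\subseteq\mathcal{P}(s)$ --- is precisely the engine of the paper's own Claim~\ref{claim:indepcoding} in the proof of Lemma~\ref{lem:func}, so your argument fits naturally with the techniques used elsewhere in the paper.
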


\begin{lem} \label{lem:upperbound}
Let $\kappa \geq \omega$ be a cardinal. Then the order dimension of
$\pr{{\[\kappa\]}^{< \omega}}{\subsetneq}$ is at most
${\log}_{2}({\log}_{2}(\kappa))$.
\end{lem}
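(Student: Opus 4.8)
The plan is to reduce the construction of a realizer to building $\lambda$ linear orders on the ground set $\kappa$ with a single ``domination'' property, and then to extract those orders from a Hausdorff independent family. Set $\lambda = {\log}_{2}({\log}_{2}(\kappa))$, so that $\kappa \le {2}^{{2}^{\lambda}}$, and put $\theta = {2}^{\lambda}$; note that $\lambda \ge \omega$, since ${\log}_{2}(\mu) \ge \omega$ for every infinite cardinal~$\mu$ and ${\log}_{2}(\kappa)$ is infinite. By Theorem~\ref{thm:indep} fix an independent family $\seq{X}{\alpha}{<}{\kappa}$ of subsets of~$\theta$ (possible as $\kappa \le {2}^{\theta}$). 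First I would record a general lifting device: given any linear order~$\lhd$ on~$\kappa$, the rule ``$X < Y$ iff the $\lhd$-largest element of the finite symmetric difference $X \triangle Y$ lies in~$Y$'' defines a linear extension of $\pr{{\[\kappa\]}^{< \omega}}{\subsetneq}$, since $X \subsetneq Y$ forces $X \triangle Y \subseteq Y$. Hence it suffices to produce $\lambda$ linear orders $\seq{\lhd}{\xi}{<}{\lambda}$ on~$\kappa$ with the following property $(\dagger)$: for every $\alpha < \kappa$ and every finite $F \subseteq \kappa \setminus \{\alpha\}$ there is $\xi < \lambda$ with $\gamma \lhd_\xi \alpha$ for all $\gamma \in F$. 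Indeed, given an incomparable pair $X, Y$ and any $\alpha \in X \setminus Y$, applying $(\dagger)$ to $F = (X \triangle Y) \setminus \{\alpha\}$ makes~$\alpha$ the $\lhd_\xi$-largest element of $X \triangle Y$, so the lifted order reverses the pair in the required direction; running this for both $X \setminus Y$ and $Y \setminus X$ shows the $\lambda$ lifted orders intersect to~$\subsetneq$.

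The independent family makes $(\dagger)$ easy to satisfy with $\theta$ many orders: for each $t \in \theta$ let $\lhd_t$ place every~$\alpha$ with $t \in X_\alpha$ above every~$\alpha$ with $t \notin X_\alpha$, breaking ties by a fixed well-order of~$\kappa$. Given~$\alpha$ and finite~$F$, independence yields $t \in X_\alpha \setminus \bigcup_{\gamma \in F} X_\gamma$, and this single~$t$ witnesses $(\dagger)$ for $\lhd_t$. The substance of the lemma is therefore to compress these $\theta = {2}^{\lambda}$ orders down to only~$\lambda$ orders. This is where the doubly-exponential bound is forced: in each of the $\lambda$ final orders only the ``dominant'' coordinate of a comparison survives, so one can no longer spend a fresh order on each witness~$t$, and one must instead exploit the binary structure $\theta = {}^{\lambda}2$ together with the finiteness of~$F$.

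Concretely, I would identify~$\theta$ with ${}^{\lambda}2$, fix a background well-order $\prec^*$ of~$\theta$, and for each $\xi < \lambda$ let $\prec_\xi$ be the well-order of~$\theta$ that lists the ``coordinate-$\xi$-is-$1$'' block $T_\xi = \{t : t(\xi) = 1\}$ first, in $\prec^*$-order, and then its complement. Taking $\lhd_\xi$ to be the lexicographic order on the characteristic functions $\{\chi_{X_\alpha} : \alpha < \kappa\}$ induced by $\prec_\xi$ gives $\lambda$ linear orders on~$\kappa$, and unwinding the definitions reduces $(\dagger)$ to the following: for every~$\alpha$ and finite~$F$, writing $A = X_\alpha \setminus \bigcup_{\gamma \in F} X_\gamma$ and $N = \bigcup_{\gamma \in F}(X_\gamma \setminus X_\alpha)$, there is~$\xi$ with $\min_{\prec_\xi}(A \cup N) \in A$. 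The main obstacle is precisely verifying this: one must choose a witness $t^* \in A$ (there are $\theta$-many, by independence) and a coordinate $\xi \in t^*$ so that no element of~$N$ lying $\prec^*$-below~$t^*$ is~$1$ at~$\xi$. Here the finiteness of~$F$ bounds the ``low'' part of~$N$ that has to be avoided, while the abundance of witnesses in~$A$ and of coordinates in~$\lambda$ should provide the room to dodge it. Pinning down this combinatorial choice---equivalently, showing that the down-sets of the $\lhd_\xi$ are cofinal in $\bigl({\[\kappa \setminus \{\alpha\}\]}^{<\omega}, \subseteq\bigr)$ for every~$\alpha$---is the heart of the argument; everything else is bookkeeping.

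Finally, combining the resulting upper bound with Lemma~\ref{lem:lowerbound} yields Theorem~\ref{thm:doublelog}. I expect the compression step to be the only genuine difficulty: the lifting device and the reduction to $(\dagger)$ are routine, and the $\theta$-many-orders construction is immediate from independence, so essentially all of the work---and the reason the answer is ${\log}_{2}({\log}_{2}(\kappa))$ rather than ${\log}_{2}(\kappa)$---lives in turning one level of independence on~${2}^{\lambda}$ into~$\lambda$ coordinated linear orders.
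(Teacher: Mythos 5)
Your frame is right up to the last step: the ``most significant bit of the symmetric difference'' device does lift any linear order $\lhd$ on $\kappa$ to a linear extension of $\subsetneq$ on ${[\kappa]}^{<\omega}$, and the reduction of the lemma to producing $\lambda={\log}_{2}({\log}_{2}(\kappa))$ linear orders on $\kappa$ with property $(\dagger)$ is valid. This even matches the skeleton of the paper's proof, which (with the roles of the letters $\theta$ and $\lambda$ interchanged) likewise comes down to $\lambda$ many linear orders on the index set in which any point can be made to dominate any prescribed finite set of other points. But the step you yourself call ``the heart of the argument'' is missing, and the construction you sketch for it fails as stated. Take $F=\{\gamma\}$, let $\mathbf{1}\in{}^{\lambda}2$ denote the constant-$1$ function, and suppose $\mathbf{1}\in X_{\gamma}\setminus X_{\alpha}$ and $\mathbf{1}$ is the $\prec^{*}$-least element of $\theta$; nothing in the definition of an independent family or in the choice of a background well-order rules this out. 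Then $\mathbf{1}\in N$ and $\mathbf{1}\in T_{\xi}$ for every $\xi<\lambda$, so $\mathbf{1}$ sits at the front of every $\prec_{\xi}$ and ${\min}_{\prec_{\xi}}(A\cup N)=\mathbf{1}\in N$ for all $\xi$ simultaneously: $(\dagger)$ fails for this pair no matter which $\xi$ you pick. (Listing $T_{\xi}$ last instead of first runs into the dual problem with the constant-$0$ function.) The structural issue is that each $\prec_{\xi}$ differs from the single order $\prec^{*}$ by one block move, so a single element of $N$ can lie below all of $A$ in all $\lambda$ orders at once; the abundance of witnesses in $A$ cannot rescue you from an obstruction that is uniformly at the bottom.

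The paper fills this gap by applying independence at the other level. Instead of taking $\kappa$ independent subsets of ${2}^{\lambda}$ and perturbing one background order coordinate by coordinate, it fixes an independent family $\langle X_{\xi}:\xi<{2}^{\lambda}\rangle$ of subsets of $\lambda$ and uses it to manufacture $\lambda$ ``reference functions'' $g_{\iota}$ from ${2}^{\lambda}$ to $2$ (namely $g_{\iota}(\xi)=0$ iff $\iota\in X_{\xi}$), which have the property that every finite partial function from ${2}^{\lambda}$ to $2$ is extended by some $g_{\iota}$. The points of $\kappa$ are coded by arbitrary distinct functions $f_{\beta}\in{}^{({2}^{\lambda})}2$, and the $\iota$-th order places $\beta$ below $\alpha$ exactly when $f_{\alpha}$ agrees with $g_{\iota}$ at $\Delta(f_{\alpha},f_{\beta})$, the first coordinate where $f_{\alpha}$ and $f_{\beta}$ differ --- that is, when $f_{\alpha}$ is lexicographically closer to the center $g_{\iota}$. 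Your $(\dagger)$ is then immediate: given $\alpha$ and finite $F$, choose $\iota$ so that $g_{\iota}$ agrees with $f_{\alpha}$ on the finite set $\{\Delta(f_{\alpha},f_{\gamma}):\gamma\in F\}$. So the compression from ${2}^{\lambda}$ orders down to $\lambda$ orders is achieved by using $\lambda$ lexicographic orders with $\lambda$ finitely generic centers, not by block-moving a single order. If you replace your orders $\lhd_{\xi}$ with these center-based orders, the rest of your argument (the lifting and the reduction to $(\dagger)$) goes through and yields the lemma.
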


\begin{proof}
Let $\theta = {\log}_{2}({\log}_{2}(\kappa))$, and let $\lambda =
{2}^{\theta}$. Then ${2}^{\lambda} \geq \kappa$, and so it is possible to
find a sequence $\seq{f}{\alpha}{<}{\kappa}$ of~$\kappa$ many distinct
functions from~$\lambda$ to~$2$. To avoid confusion, we use ${}^{\lambda}{2}$
to denote the collection of all functions from~$\lambda$ to~$2$. Next, use
Theorem~\ref{thm:indep} to fix an independent family
$\seq{X}{\xi}{<}{\lambda}$ of subsets of~$\theta$. For each $\iota < \theta$,
define a function ${g}_{\iota}: \lambda \rightarrow 2$ by stipulating that
${g}_{\iota}(\xi) = 0$ iff $\iota \in {X}_{\xi}$. Given $f, g \in
{}^{\lambda}{2}$ with $f \neq g$, let $\Delta(f, g) = \min\{\xi < \lambda:
f(\xi) \neq g(\xi)\}$. Unfixing $\iota < \theta$, we are going to define a
linear order ${<}_{\iota}$ on ${\[\kappa\]}^{< \omega}$ for each $\iota <
\theta$. First declare, for each $\iota < \theta$ and all $u, v \in
{\[\kappa\]}^{< \omega}$, that $u \: {\<}_{\iota} \: v$ if and only if either
$u \subsetneq v$ or~$f_\alpha$ coincides with~$g_\iota$ on
$\Delta(u,f_\alpha)=\{\Delta(f_\beta,f_\alpha): \beta\in u\}$ for some
$\alpha\in v\setminus u$, i.e., $f_\alpha(\xi)=g_\iota(\xi)$ for each $\xi\in
\Delta(u,f_\alpha)$.

\begin{Claim} \label{claim:upperbound}
For each $\iota < \theta$,~${\<}_{\iota}$ is a partial order on
${\[\kappa\]}^{< \omega}$.
\end{Claim}

\begin{proof}
Clearly, $u \: {\nprec}_{\iota}\: u$ because both the defining clauses of
${\<}_{\iota}$ fail in this case. So it remains to check that~${\<}_{\iota}$
is transitive. Consider $u, v, w \in {\[\kappa\]}^{< \omega}$ and suppose
that $u \: {\<}_{\iota} \: v$ and that $v \: {\<}_{\iota} \: w$. There are
four cases to consider.

Case~1: $u \subsetneq v$ and $v \subsetneq w$. Then $u \subsetneq w$, whence
$u \: {\<}_{\iota} \: w$.

Case~2: $u \subsetneq v$ and~$f_\alpha$ coincides with~$g_\iota$ on
$\Delta(v,f_\alpha)$ for some $\alpha\in w \setminus v$. Then, such~$\alpha$
is an element of $w \setminus u$, and~$f_\alpha$ coincides with~$g_\iota$ on
$\Delta(u,f_\alpha)$ since $\Delta(u,f_\alpha)\subset\Delta(v,f_\alpha)$ by
$u\subsetneq v$. So $u \: {\<}_{\iota}\: w$.

Case~3:~$f_\alpha$ coincides with~$g_\iota$ on $\Delta(u,f_\alpha)$ for some
$\alpha\in u\setminus v$ and $v \subsetneq w$. Then, such $\alpha\in
w\setminus u$, and~$f_\alpha$ coincides with~$g_\iota$ on
$\Delta(u,f_\alpha)$. So $u \: {\<}_{\iota}\: w$.

Case~4:~$f_\beta$ coincides with~$g_\iota$ on $\Delta(u,f_\beta)$ for some
$\beta\in v\setminus u$, and~$f_\gamma$ coincides with~$g_\iota$ on
$\Delta(v,f_\gamma)$ for some $\gamma\in w\setminus v$ . Note that~$f_\beta$
and~$f_\gamma$ coincide with~$g_\iota$ on $\Delta(u,f_\beta)$ and
$\Delta(v,f_\gamma)$, respectively, and $f_\beta(\Delta(f_\beta,f_\gamma))\ne
f_\gamma(\Delta(f_\beta,f_\gamma))=g_\iota(\Delta(f_\beta,f_\gamma))$. Hence
we have $\Delta(f_\beta,f_\gamma)\not\in\Delta(u,f_\beta)$, which implies
$\gamma \not\in u$ and so $\gamma \in w \setminus u$. Fixing $\alpha\in
u$, we next show that $f_\gamma(\Delta(f_\alpha,f_\gamma)) =
g_\iota(\Delta(f_\alpha,f_\gamma))$. From the above conclusion, we know that
$\Delta(f_\alpha,f_\beta)\ne\Delta(f_\beta,f_\gamma)$. There are two subcases
to check.

Subcase~4.1: $\Delta(f_\alpha,f_\beta)<\Delta(f_\beta,f_\gamma)$. Then
$\Delta(f_\alpha,f_\gamma) = \Delta(f_\alpha,f_\beta)$, and therefore
$f_\gamma(\Delta(f_\alpha,f_\gamma))=f_\gamma(\Delta(f_\alpha,f_\beta)) =
f_\beta(\Delta(f_\alpha,f_\beta))=g_\iota(\Delta(f_\alpha,f_\beta)) =
g_\iota(\Delta(f_\alpha,f_\gamma))$.

Subcase~4.2: $\Delta(f_\alpha,f_\beta)>\Delta(f_\beta,f_\gamma)$. Then
$\Delta(f_\alpha,f_\gamma)=\Delta(f_\beta,f_\gamma)$, and therefore
$f_\gamma(\Delta(f_\alpha,f_\gamma))=f_\gamma(\Delta(f_\beta,f_\gamma))
=g_\iota(\Delta(f_\beta,f_\gamma))=g_\iota(\Delta(f_\alpha,f_\gamma))$.

We conclude that $\gamma \in  w \setminus u$ and that ${f}_{\gamma}$
coincides with~$g_\iota$ on $\Delta(u,f_\gamma)$. So $u \: {\<}_{\iota}\: w$.

This completes the proof of the claim.
\end{proof}

\noindent
Now, for each $\iota < \theta$, let~${<}_{\iota}$ be any linear order on
${\[\kappa\]}^{< \omega}$ extending~${\<}_{\iota}$. Note that for any $u, v
\in {\[\kappa\]}^{<\omega}$, if $u \subsetneq v$, then $u \: {\<}_{\iota} \:
v$, and hence $u \:{<}_{\iota} \: v$. Thus $\seq{<}{\iota}{<}{\theta}$ is a
sequence of linear orders on ${\[\kappa\]}^{< \omega}$ compatible
with~$\subsetneq$. To see that this sequence of linear orders witnesses that
the dimension of ${\[\kappa\]}^{< \omega}$ is at most~$\theta$, fix any $u, v
\in {\[\kappa\]}^{< \omega}$ and suppose that $v \not\subseteq u$. It
suffices to find an $\iota < \theta$ such that $u \: {\<}_{\iota} \: v$. Fix
$\alpha \in v \setminus u$. If $u = \0$, then $\0 \subsetneq v$, and so for
every $\iota < \theta$, $u \: {\<}_{\iota} \: v$. Therefore we may assume
that $u \neq\0$. Define $F = \Delta(u,f_\alpha)$.~$F$ is a nonempty finite
subset of~$\lambda$. Define $\sigma: F \rightarrow 2$ by setting $\sigma(\xi)
= {f}_{\alpha}(\xi)$, for all $\xi \in F$. Since $\seq{X}{\xi}{<}{\lambda}$
is an independent family of subsets of~$\theta$, ${\bigcap}_{\xi \in
F}{{X}^{\sigma(\xi)}_{\xi}}$ is nonempty. Let $\iota = \min\left(
{\bigcap}_{\xi \in F}{{X}^{\sigma(\xi)}_{\xi}} \right)$. Then $\iota \in
\theta$ and for each $\beta \in u$, ${g}_{\iota}(\Delta({f}_{\alpha},
{f}_{\beta})) = \sigma(\Delta({f}_{\alpha}, {f}_{\beta})) =
{f}_{\alpha}(\Delta({f}_{\alpha}, {f}_{\beta}))$. In other words,~$f_\alpha$
coincides with~$g_\iota$ on $\Delta(u,f_\alpha)$. So $u \: {\<}_{\iota} \: v$
as claimed.
\end{proof}

\section{The dimension of locally countable partial orderings}%
\label{sec:loccoun}

\noindent
The setting of locally countable partial orders (for which the Turing degrees
and many other degree structures from computability theory form natural
examples) is quite a bit more complicated.

Even though the following theorem is not needed for the proof of our main
result, it provides information about arbitrary locally countable partial
orders which is likely to be useful in their analysis. It proves the
existence of a ranking function on them.
\begin{lem}\label{lem:rank}
Suppose $\PPP = (P,\<)$ is any locally countable partial ordering. Then there
is a function $r: P \to \eta \cdot \w_1$ such that for all $x, y \in P$, $x
\< y$ implies $r(x)<r(y)$. (Here, $\eta \cdot \w_1$ is the order product of
these two order types under the antilexicographical ordering.)
\end{lem}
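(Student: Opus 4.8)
The plan is to decompose the target $\eta\cdot\w_1$ as $\w_1$ many consecutive copies of $\QQ$ and to build~$r$ in two coordinates. Writing $r(x)=(q(x),h(x))$ with $h(x)\in\w_1$ the level and $q(x)\in\QQ$ the position inside the level, the antilexicographical order makes $x\prec y\Rightarrow r(x)<r(y)$ equivalent to the two conditions (i) $x\prec y\Rightarrow h(x)\le h(y)$, and (ii) whenever $x\prec y$ with $h(x)=h(y)$ one has $q(x)<q(y)$. Thus it suffices to produce a weakly monotone level map $h\colon P\to\w_1$ whose fibres $L_\alpha=h^{-1}(\alpha)$, with the induced ordering, each admit a strict order preserving map into $\QQ$; equivalently, to build an increasing continuous chain of $\prec$-downward closed sets $\langle P_\alpha:\alpha\le\w_1\rangle$ with $P_0=\0$, $P_{\w_1}=P$, and with every layer $P_{\alpha+1}\setminus P_\alpha$ order embeddable (after collapsing antichains) into $\QQ$. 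Note that an arbitrarily large antichain embeds into $\QQ$ by collapsing it to a single point, so there is no cardinality restriction on~$P$ to worry about.

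The single resource I have is local countability: $\{y:y\prec x\}$ is countable for every~$x$. I would define $h(x)$ as the rank of~$x$ in a well-founded auxiliary relation $\lhd\,\subseteq\,\prec$ designed to register only the genuine upward jumps while absorbing ill-founded descending behaviour into one level. The point of using such a relation is twofold. On one hand, since the $\lhd$-predecessors of~$x$ all lie in the countable set $\downarrow x=\{y:y\preceq x\}$, the rank of~$x$ is computed inside a countable well-founded order and is therefore a countable ordinal; this is exactly where local countability (rather than merely a bound on predecessor cofinalities) is used to keep~$h$ below~$\w_1$. On the other hand, comparable pairs $x\prec y$ landing in the same fibre~$L_\alpha$ are precisely the pairs that $\lhd$ refuses to separate, and these should form a $\QQ$-embeddable suborder, on which~$q$ is then defined by a countable linear refinement.

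The hard part will be choosing $\lhd$ so that it threads between two opposite failures. If one peels too finely --- say, declaring~$x$ to have been ranked once all but finitely many of its predecessors are ranked --- the recursion never starts on an order like $\mathbb{Z}$, whose every element has infinitely many predecessors, and it stalls at the empty set. If one peels too coarsely --- ranking~$x$ as soon as its (always countable) set of unranked predecessors is countable --- then everything is ranked at stage~$0$ and a single uncountable chain such as~$\w_1$ is wrongly collapsed into one non-$\QQ$-embeddable level. The correct $\lhd$ must keep ill-founded clusters (the $\mathbb{Z}$-like and $\QQ$-like pieces) inside one level while spreading genuinely unbounded well-ordered structure across cofinally many levels of~$\w_1$. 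Verifying that the chosen relation is well-founded, that its rank function is weakly monotone for~$\prec$ (which comes down to transitivity of~$\prec$ together with the fact that $\downarrow x\subseteq\downarrow y$ when $x\prec y$), and that the recursion closes off by stage~$\w_1$ because each $\downarrow x$ is countable, is the technical heart of the argument; the construction of~$q$ on each level is then routine, since every countable --- or antichain-collapsible --- order embeds into~$\QQ$.
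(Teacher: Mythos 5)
Your proposal correctly identifies the shape of the target (a weakly $\prec$-monotone level map $h\colon P\to\w_1$ whose fibres carry a strict order-preserving map into $\QQ$), but it stops exactly at the point where the proof has to happen: you never define the auxiliary relation $\lhd$ whose rank function is supposed to give $h$. You yourself observe that the two natural candidates fail (one stalls on $\mathbb{Z}$-like pieces, the other collapses $\w_1$ into a single fibre) and then declare that ``the correct $\lhd$'' must thread between them and that verifying its properties is ``the technical heart of the argument'' --- but that heart is missing. As written this is a plan, not a proof. A second, smaller issue: your reformulation as a continuous chain $\langle P_\alpha:\alpha\le\w_1\rangle$ of length $\w_1$ cannot be right as stated when $\lc P\rc>\aleph_1$, since $\w_1$ many $\QQ$-embeddable (hence, on each comparability component, countable) layers need not exhaust $P$; any correct construction must allow the recursion to run for $\lc P\rc$ many steps and to reuse levels $\gamma<\w_1$, with the fibres being possibly uncountable unions of pairwise incomparable countable blocks.

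The paper avoids an intrinsic rank altogether. It enumerates a $\preceq$-cofinal sequence $\langle x_\alpha:\alpha<\lambda\rangle$ (greedily picking the next point not below any point chosen so far), sets $A_\alpha=\{x: x\preceq x_\alpha\}$, and ranks the points of $A_\alpha\setminus\bigcup_{\alpha'<\alpha}A_{\alpha'}$ at stage $\alpha$. The key observation is that $A_\alpha$ is countable, so $A_\alpha\cap\bigcup_{\alpha'<\alpha}A_{\alpha'}$ is covered by countably many earlier $A_{\alpha'}$, whose $r$-image is a countable subset of $\eta\cdot\w_1$ and hence lies inside $\eta\cdot\gamma$ for some $\gamma<\w_1$; the (countably many) new points are then linearized into the single copy $\eta\cdot\{\gamma\}$. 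Downward closure of the $A_{\alpha'}$ guarantees that no new point lies below an old one, and comparable new points from distinct stages are automatically forced into distinct levels, so monotonicity holds. This is precisely the device your proposal is searching for, and without it (or an explicit, verified choice of $\lhd$) the argument is incomplete.
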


\begin{proof}
If $P$ is empty, then there is nothing to prove. So we assume that $P$ is
non-empty. Let $\kappa = \lc P \rc$. From an enumeration $\langle y_\beta:
\beta<\kappa \rangle$ of~$P$, we construct a cofinal sequence $\langle
x_\alpha: \alpha < \lambda \rangle$ of elements of~$P$ by recursion as
follows: Let $x_0 = y_0$, and for $\alpha > 0$, let $x_\alpha = y_\beta$ for
the least~$\beta$ such that $y_\beta \not\preceq x_{\alpha'}$ for any
$\alpha' < \alpha$. (The recursion stops at the least ordinal~$\lambda$ when
there is no such~$\beta$.)

Now, for each $\alpha < \lambda$, let $A_\alpha = \{x \in P: x \preceq
x_\alpha\}$ and recall that by local countability, each~$A_\alpha$ is
countable. Now define~$r$ by recursion on $\alpha < \lambda$: Let $r
\restrict A_0$ map~$A_0$ into $\eta \cdot \{0\}$ using any linearization of
$\< \restrict A_0^2$. For~$\alpha$ with $0 < \alpha < \lambda$, assume that
$r\restrict\left( {\bigcup}_{\alpha' < \alpha}{{A}_{\alpha'}} \right)$ has
been defined. Find a countable set $B_\alpha \subseteq \alpha$ such that
\begin{align*}
  A_\alpha \cap \bigcup_{\alpha' < \alpha} A_{\alpha'} =
  A_\alpha \cap \bigcup_{\alpha' \in B_\alpha} A_{\alpha'}.
\end{align*}
Fix $\gamma < \w_1$ such that
\begin{align*}
  r''\left(\bigcup_{\alpha' \in B_\alpha} A_{\alpha'}\right) \subseteq \eta
       \cdot \gamma.
\end{align*}
Now extend the definition of~$r$ to $A_\alpha \setminus \left(
\bigcup_{\alpha' \in B_\alpha} A_{\alpha'} \right)$ by mapping this set into
$\eta \cdot \{\gamma\}$ using any linearization of~$\<$ on this set. It is
clear that such countable~$\gamma$ must exist because $\bigcup_{\alpha' \in
B_\alpha} A_{\alpha'}$ is countable, giving us the desired map~$r$.
\end{proof}

\noindent
Our main theorem on locally countable partial orders is the following:

\begin{thm}\label{poset-dim}
Let~$\kappa$ be a regular uncountable cardinal and $\PPP = (P,\<)$ a locally
countable partial order of size~$\kappa^+$. Then~$\PPP$ has dimension at
most~$\kappa$.
\end{thm}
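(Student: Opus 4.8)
The plan is to build $\kappa$ many linearizations of $\<$ whose intersection is exactly $\<$. The key structural tool will be Lemma~\ref{lem:rank}, which gives a rank function $r\colon P \to \eta \cdot \w_1$ that is strictly order-preserving. Since $|P| = \kappa^+$ and each $A_\alpha$ in that lemma is countable, I would first try to organize $P$ into a continuous increasing chain of ``initial segments'' $P = \bigcup_{\xi < \kappa^+} Q_\xi$ where each $Q_\xi$ is downward closed under $\<$, has size at most $\kappa$, and $Q_\xi = \bigcup_{\zeta < \xi} Q_\zeta$ at limits. The regularity and uncountability of $\kappa$ should let me arrange, by a reflection/closing-off argument, that for each $x \in Q_{\xi+1} \setminus Q_\xi$ all of its $\<$-predecessors already lie in $Q_{\xi+1}$ (using local countability to absorb the countably many predecessors).

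Next I would handle the two ``directions'' of incomparability separately, as is standard in dimension arguments: to realize $\<$ as an intersection of linear orders, it suffices that for every pair $x, y$ with $y \not\preceq x$ there be at least one linearization placing $x$ before $y$. The cheap direction is to use the rank function: whenever $r(x) < r(y)$, every linearization I build will automatically be arranged to respect $r$, so those pairs are free. The work is therefore concentrated on pairs $x, y$ that are $\<$-incomparable but have equal or non-comparable ranks, and more importantly on pairs living at the same ``level'' $Q_{\xi+1} \setminus Q_\xi$. Here I expect to exploit that each such level, together with everything below it, involves only $\kappa$ many points, so that the dimension problem at each level is a problem about a locally countable (indeed essentially size-$\kappa$) poset, for which I can afford $\kappa$ many linear orders.

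The heart of the construction will be to interleave these level-by-level linearizations coherently into $\kappa$ global linear orders. Concretely, I would fix for each index $\iota < \kappa$ a recipe that, as $\xi$ increases, extends the already-chosen linear order on $Q_\xi$ to $Q_{\xi+1}$, inserting the new countable-over-$\kappa$ batch of points in a way that (i) respects $\<$, (ii) respects the global rank order coming from $r$, and (iii) for the $\iota$-th order, correctly separates a prescribed family of incomparable pairs assigned to index $\iota$. A bookkeeping argument distributes all the incomparable pairs $\{x,y\}$ of $P$ (there are $\kappa^+$ of them, but only $\kappa$ relevant ``at or below'' any given level) among the $\kappa$ indices so that each pair is separated in the intended direction by at least one $\iota$. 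The main obstacle I anticipate is continuity at limit stages of $\xi$: I must ensure that the linear orders chosen cofinally below a limit $\xi$ cohere into a single linear order on $Q_\xi$, which requires the level decomposition to be genuinely continuous and the insertion recipe at successor steps to never reorder previously placed points. Verifying that this coherence can be maintained while still separating every incomparable pair in the correct direction is where the delicate combinatorics will lie.
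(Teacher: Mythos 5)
There is a genuine gap, and also a step that is outright wrong as stated. The wrong step: you require every linearization to ``respect the global rank order coming from $r$.'' Recall that $r$ only satisfies $x \prec y \Rightarrow r(x) < r(y)$, not the converse, and $\eta\cdot\omega_1$ is a linear order, so for a typical incomparable pair $x,y$ you will have, say, $r(x) < r(y)$ strictly. To have the intersection of your linearizations equal $\prec$ (rather than properly contain it), you need \emph{some} linearization placing $y$ before $x$; but no $r$-respecting linearization can do that. Your reduction ``it suffices that for every ordered pair $(x,y)$ with $y \not\preceq x$ some linearization places $x$ before $y$'' is the right criterion, but it applies to both orientations of each incomparable pair, and your rank constraint makes one orientation unachievable. (The paper in fact notes that Lemma~\ref{lem:rank} is not used in the proof of Theorem~\ref{poset-dim}; building the argument around it leads you astray here.)

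The gap: even dropping the rank constraint, the entire combinatorial content of the theorem is the part you defer as ``delicate combinatorics,'' namely how to distribute $\kappa^+$ many separation requirements among only $\kappa$ linear orders so that the set of requirements assigned to any single order, together with $\prec$, generates no cycle. The filtration $P=\bigcup_{\xi<\kappa^+}Q_\xi$ into downward-closed pieces of size $\le\kappa$ does not by itself reduce the problem, since pairs straddle levels and each of the $\kappa$ orders must still handle $\kappa^+$ many pairs; the bookkeeping you describe is essentially a restatement of the problem. The paper's solution has two ingredients you would need substitutes for: (i) a family $\langle f_\alpha:\alpha<\kappa\rangle$ of $2$-valued functions on $P$ such that every countable set $A$ and every $y\notin A$ are separated by some $f_\alpha$ (Lemma~\ref{lem:func}, proved from a $\kappa^+$-sized almost disjoint family on $\kappa$ -- this is exactly where regularity of $\kappa$ enters, a hypothesis your outline never uses); and (ii) the observation that the relation ``all $\prec$-predecessors of $y$ get value $0$, some $\prec$-predecessor of $x$ gets value $1$'' is automatically cycle-free when adjoined to $\prec$ (Lemma~\ref{lem:linear}), which is what guarantees each batch of requirements extends to a single linearization. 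Without an analogue of these two steps the proposal does not constitute a proof.
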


\noindent
Our proof of this theorem uses two lemmas:

\begin{lem}\label{lem:func}
For~$\kappa$ and $\PPP = (P,\<)$ as in Theorem~\ref{poset-dim}, there is a
sequence $\langle f_\alpha: \alpha < \kappa \rangle$ of functions $f_\alpha:
P \to 2$ such that for any countable subset $A \subseteq P$ and any $y \in P
\setminus A$, there is $\alpha < \kappa$ such that $f_\alpha''A = \{0\}$ and
$f_\alpha(y)=1$.
\end{lem}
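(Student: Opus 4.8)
The plan is to observe first that neither the partial order~$\<$ nor local countability plays any role in this lemma: the conclusion mentions only maps $P\to 2$, and what we actually use is just that $|P|=\kappa^+$ while $\kappa$ is regular and uncountable. So I would fix a bijection identifying $P$ with~$\kappa^+$ and work with $P=\kappa^+$. Specifying a sequence $\langle f_\alpha:\alpha<\kappa\rangle$ of maps $P\to 2$ is the same as assigning to each point $z\in\kappa^+$ the set $A_z=\{\alpha<\kappa: f_\alpha(z)=1\}\subseteq\kappa$, and conversely any assignment $z\mapsto A_z$ of subsets of~$\kappa$ defines such a sequence via $f_\alpha(z)=1\iff\alpha\in A_z$. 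Under this translation the requirement that for every countable~$A$ and every $y\in P\setminus A$ some~$\alpha$ satisfies $f_\alpha''A=\{0\}$ and $f_\alpha(y)=1$ becomes exactly: for every countable $A\subseteq\kappa^+$ and every $y\notin A$ one has $A_y\not\subseteq\bigcup_{a\in A}A_a$, i.e.\ there is $\alpha\in A_y$ lying outside every~$A_a$.

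Second, I would reduce this to a purely combinatorial statement about subsets of~$\kappa$. It suffices to produce an \emph{almost disjoint} family $\{A_\nu:\nu<\kappa^+\}$ of subsets of~$\kappa$, each of cardinality~$\kappa$, with $|A_\nu\cap A_{\nu'}|<\kappa$ whenever $\nu\neq\nu'$, and then, identifying $P$ with~$\kappa^+$, take the set attached to $z$ to be~$A_z$. Indeed, for countable~$A$ and $y\notin A$, almost disjointness gives $|A_y\cap A_a|<\kappa$ for each $a\in A$, so $A_y\cap\bigcup_{a\in A}A_a=\bigcup_{a\in A}(A_y\cap A_a)$ is a union of countably many sets each of size $<\kappa$; since $\cf(\kappa)>\omega$, this union has cardinality $<\kappa$, whereas $|A_y|=\kappa$, so $A_y\setminus\bigcup_{a\in A}A_a\neq\0$ and any~$\alpha$ in it works. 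This single cardinality count is the only place where regularity and uncountability of~$\kappa$ enter the separation argument.

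Third, I would construct the almost disjoint family of size~$\kappa^+$ in ZFC. The heart is an extension lemma: any almost disjoint family $\{A_\mu:\mu<\kappa\}$ of $\kappa$-sized subsets of~$\kappa$ can be extended, i.e.\ there is $B\subseteq\kappa$ with $|B|=\kappa$ and $|B\cap A_\mu|<\kappa$ for all~$\mu$. I would prove this by a diagonal construction, recursively choosing distinct $b_\mu\in A_\mu\setminus\bigcup_{\mu'<\mu}A_{\mu'}$; this is possible because $\bigcup_{\mu'<\mu}(A_\mu\cap A_{\mu'})$ is a union of $|\mu|<\kappa$ sets of size $<\kappa$ and hence (regularity of~$\kappa$) has size $<\kappa$, leaving $\kappa$-many candidates inside~$A_\mu$. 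Then $B=\{b_\mu:\mu<\kappa\}$ meets each~$A_\mu$ in a subset of $\{b_{\mu'}:\mu'\leq\mu\}$, which has size $<\kappa$. With this lemma in hand I would start from a partition of~$\kappa$ into~$\kappa$ disjoint pieces each of size~$\kappa$ (via $\kappa\cong\kappa\times\kappa$) and add one new almost disjoint set at each stage of a recursion of length~$\kappa^+$; at every stage $\nu\in[\kappa,\kappa^+)$ the family built so far has cardinality exactly~$\kappa$, so the extension lemma applies verbatim, and almost disjointness is preserved at limits because it is a property of pairs.

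The main obstacle is precisely this third step — equivalently, the fact that on a regular cardinal~$\kappa$ no almost disjoint family of size~$\kappa$ is maximal, which is what keeps the recursion from stalling before length~$\kappa^+$. Everything else is essentially bookkeeping: the translation in the first step is a tautology, and the separation computation in the second step is the one-line count above once almost disjointness and $\cf(\kappa)>\omega$ are available. The one point I would double-check carefully is that the initial partition and every extension genuinely have cardinality~$\kappa$ (not merely $\leq\kappa$), since it is exactly the equality $|A_y|=\kappa$ that the second step consumes.
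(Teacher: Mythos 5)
Your proposal is correct and is essentially the paper's own argument: both reduce the lemma to the existence of an almost disjoint family of $\kappa^+$ many $\kappa$-sized subsets of~$\kappa$ and then use $\cf(\kappa)>\omega$ to see that $A_y\setminus\bigcup_{a\in A}A_a\neq\emptyset$ for countable~$A$. The only differences are cosmetic: you bypass the paper's intermediate coding step (its Claim on sequences $\langle E_\xi\rangle$ separated by \emph{finite} sets, which is redundant once the separation is by a singleton) and you prove the almost disjoint family exists rather than citing Kunen, and your construction of it is the standard correct one.
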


\begin{proof}
The hypothesis is that $\lc P \rc = {\kappa}^{+}$. Let
$\seq{x}{\xi}{<}{{\kappa}^{+}}$ be a 1--1 enumeration of~$P$. The proof will
proceed via three claims.

\begin{Claim} \label{claim:reading}
Suppose that there is a sequence $\seq{g}{\xi}{<}{{\kappa}^{+}}$ such that
for each $\xi < {\kappa}^{+}$, ${g}_{\xi}: \kappa \rightarrow 2$, and that
for every countable $B \subseteq {\kappa}^{+}$ and every $\zeta \in
{\kappa}^{+} \setminus B$, there exists $\alpha < \kappa$ so that $\forall
\xi \in B\[{g}_{\xi}(\alpha) = 0\]$ and ${g}_{\zeta}(\alpha) = 1$. Then there
exists a sequence of functions $\seq{f}{\alpha}{<}{\kappa}$ as in the
statement of the lemma.
\end{Claim}

\begin{proof}
Let $\seq{g}{\xi}{<}{{\kappa}^{+}}$ be given. Fix $\alpha < \kappa$ and
define ${f}_{\alpha}: P \rightarrow 2$ by stipulating that
${f}_{\alpha}({x}_{\xi}) = {g}_{\xi}(\alpha)$ for each $\xi < {\kappa}^{+}$.
Suppose that $A \subseteq P$ is countable and that $y \in P \setminus A$. Let
$B = \{\xi < {\kappa}^{+}: {x}_{\xi} \in A\}$. Then~$B$ is a countable subset
of ${\kappa}^{+}$ because we have a 1--1 enumeration of~$P$. Let $\zeta <
{\kappa}^{+}$ be so that $y = {x}_{\zeta}$. Note $\zeta \in {\kappa}^{+}
\setminus B$. So there is $\alpha < \kappa$ so that $\forall \xi \in
B\[{g}_{\xi}(\alpha) = 0\]$ and ${g}_{\zeta}(\alpha) = 1$. If $x \in A$, then
$x = {x}_{\xi}$ for some $\xi \in B$, and so ${f}_{\alpha}(x) =
{f}_{\alpha}({x}_{\xi}) = {g}_{\xi}(\alpha) = 0$. Therefore $\forall x \in
A\[{f}_{\alpha}(x) = 0\]$ and ${f}_{\alpha}(y) = {f}_{\alpha}({x}_{\zeta}) =
{g}_{\zeta}(\alpha) = 1$, as required.
\end{proof}

\begin{Claim} \label{claim:indepcoding}
Suppose that there is a sequence $\seq{E}{\xi}{< }{{\kappa}^{+}}$ such that
the following conditions hold:
\begin{enumerate}
\item $\forall \xi < {\kappa}^{+}\[{E}_{\xi} \subseteq \kappa\]$;
\item for any countable $B \subseteq {\kappa}^{+}$ and any $\zeta \in
    {\kappa}^{+} \setminus B$, there is a finite set $F \subseteq \kappa$
    such that $\forall \xi \in B\[{E}_{\xi} \cap F \neq {E}_{\zeta} \cap
    F\]$.
\end{enumerate}
Then there is a sequence of functions $\seq{g}{\xi}{<}{{\kappa}^{+}}$ as in
Claim~\ref{claim:reading}.
\end{Claim}

\begin{proof}
Let $L = \{ \pr{s}{H}: s \in {\[\kappa\]}^{< \omega} \wedge H \subseteq
\Pset(s) \}$. Clearly $\lc L \rc = \kappa$. So it suffices to find a sequence
$\seq{h}{\xi}{<}{{\kappa}^{+}}$ of functions ${h}_{\xi}: L \rightarrow 2$
satisfying the properties of the sequence $\seq{g}{\xi}{<}{{\kappa}^{+}}$
from Claim~\ref{claim:reading}. Fix $\xi < {\kappa}^{+}$ and define
${h}_{\xi}: L \rightarrow 2$ by stipulating, for all $\pr{s}{H} \in L$, that
${h}_{\xi}(\pr{s}{H}) = 1$ if and only if ${E}_{\xi} \cap s \in H$. Let $B
\subseteq {\kappa}^{+}$ be countable and suppose $\zeta \in {\kappa}^{+}
\setminus B$. By~(2), there is a finite set $F \subseteq \kappa$ such that
$\forall \xi \in B\[{E}_{\xi} \cap F \neq {E}_{\zeta} \cap F\]$. Let $H = \{F
\cap {E}_{\zeta}\}$. Then $F \in {\[\kappa\]}^{< \omega}$ and $H \subseteq
\Pset(F)$. Therefore $\pr{F}{H} \in L$. Since ${E}_{\zeta} \cap F \in H$,
${h}_{\zeta}(\pr{F}{H}) = 1$. On the other hand, for any $\xi \in B$,
${E}_{\xi} \cap F \notin H$ because ${E}_{\xi} \cap F \neq {E}_{\zeta} \cap
F$. Hence ${h}_{\xi}(\pr{F}{H}) = 0$. So we have $\forall \xi \in
B\[{h}_{\xi}(\pr{F}{H}) = 0\]$, as required.
\end{proof}

\begin{Claim} \label{claim:a.d.}
There is a sequence $\seq{E}{\xi}{<}{{\kappa}^{+}}$ satisfying~(1) and~(2) of
Claim~\ref{claim:indepcoding}.
\end{Claim}

\begin{proof}
By well-known arguments (see, e.g., Kunen \cite[Theorem 1.2]{Kunen}), there
is a sequence $\seq{E}{\xi}{<}{{\kappa}^{+}}$, which is an almost disjoint
family of subsets of~$\kappa$. In other words, $\forall \xi <
{\kappa}^{+}\[{E}_{\xi} \in {\[\kappa\]}^{\kappa}\]$, and $\forall \xi, \zeta
< {\kappa}^{+}\[\xi \neq \zeta \implies \lc {E}_{\xi} \cap {E}_{\zeta} \rc <
\kappa \]$. The standard argument for the existence of such an almost
disjoint family (like the proof of \cite[Theorem 1.2]{Kunen}) uses the
regularity of $\kappa$. Suppose $B \subseteq {\kappa}^{+}$ is countable and
that $\zeta \in {\kappa}^{+} \setminus B$. Then $\lc {E}_{\zeta} \cap
{E}_{\xi} \rc < \kappa$ for all $\xi \in B$, and since $\cf(\kappa) >
\omega$,
$\lc {\bigcup}_{\xi \in B}{\left( {E}_{\zeta} \cap {E}_{\xi} \right)} \rc <
\kappa$. So choose $\alpha \in {E}_{\zeta} \setminus \left({\bigcup}_{\xi \in
B}{{E}_{\xi}} \right)$. Then $F = \{\alpha\}$ is a finite subset of~$\kappa$
and for any $\xi \in B$, $F \cap {E}_{\xi} = \emptyset$, while $F \cap
{E}_{\zeta} = \{\alpha\}$. So~$F$ is as required.
\end{proof}

\noindent
Now Claim~\ref{claim:a.d.} says that the hypotheses of
Claim~\ref{claim:indepcoding} are satisfied. So there is a sequence
$\seq{g}{\xi}{<}{{\kappa}^{+}}$ satisfying the hypotheses of
Claim~\ref{claim:reading}. Therefore, there is a sequence
$\seq{f}{\alpha}{<}{\kappa}$ as in the statement of the lemma.
\end{proof}

\begin{lem}\label{lem:linear}
Fix~$\kappa$ and $\PPP = (P,\<)$ as in Theorem~\ref{poset-dim},
$\{f_\alpha\}_{\alpha<\kappa}$ as in Lemma~\ref{lem:func}, and
$\alpha<\kappa$. Then there is a linearization~$<_\alpha$ of~$\<$ such that
for any $x, y \in P$, if $f_\alpha(u)=1$ for some $u \in P$ with $u\preceq x$
and $f_\alpha(z)=0$ for all $z \in P$ with $z\preceq y$, then $y <_\alpha x$.
\end{lem}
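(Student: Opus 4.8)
The plan is to split~$P$ according to the coloring $f_\alpha$ into a down-set and an up-set, linearize each piece separately, and then place the entire down-set below the entire up-set. Concretely, I would set $D = \{ y \in P : f_\alpha(z) = 0 \text{ for every } z \preceq y \}$ and $U = \{ x \in P : f_\alpha(u) = 1 \text{ for some } u \preceq x \}$. By definition $U$ is exactly the complement of~$D$ in~$P$, so $D$ and~$U$ partition~$P$. The conclusion we are after says precisely that whenever $x \in U$ and $y \in D$ we have $y <_\alpha x$; in other words, the linear order should place all of~$D$ before all of~$U$.

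The key structural observation, and really the only point with any content, is that~$D$ is downward closed under~$\prec$. Indeed, if $y \in D$ and $y' \prec y$, then every $z \preceq y'$ also satisfies $z \preceq y$ by transitivity, so $f_\alpha(z) = 0$; hence $y' \in D$. Equivalently, $U$ is upward closed. This is exactly what will allow the two separate linearizations to be concatenated without destroying compatibility with~$\prec$.

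With this in hand, I would invoke the order-extension principle (available in~$\ZFC$, and in any case unproblematic here since~$P$ is well-orderable) to fix a linearization $<_D$ of $\prec \restrict D^2$ and a linearization $<_U$ of $\prec \restrict U^2$. I then define $<_\alpha$ by declaring $a <_\alpha b$ iff one of the following holds: $a \in D$ and $b \in U$; or $a, b \in D$ and $a <_D b$; or $a, b \in U$ and $a <_U b$. This is visibly a linear order on~$P$, with totality, irreflexivity, and transitivity all following from the block structure together with the fact that $<_D$ and $<_U$ are linear. To check that $<_\alpha$ extends~$\prec$, suppose $a \prec b$. If $a$ and~$b$ lie in the same block, compatibility is immediate from the choice of $<_D$ or $<_U$; if $a \in D$ and $b \in U$, then $a <_\alpha b$ by the first clause. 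The remaining case, $a \in U$ and $b \in D$, is precisely the one excluded by the previous paragraph: $a \prec b$ with $b \in D$ forces $a \in D$ since~$D$ is a down-set, so this case never arises. Thus $<_\alpha$ is a linearization of~$\prec$, and by construction every element of~$D$ precedes every element of~$U$, which is the desired property.

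I do not expect any genuine obstacle here. The combinatorial work has already been spent in Lemma~\ref{lem:func}; the present lemma only needs to repackage a single coloring $f_\alpha$ into a linear extension, and the one idea required is recognizing that the set of points all of whose predecessors are colored~$0$ forms an initial segment of the partial order. Everything else is routine bookkeeping.
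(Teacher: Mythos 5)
Your proof is correct. It is worth noting how it relates to the paper's argument: the paper defines the auxiliary relation $y <'_\alpha x$ iff $f_\alpha$ is identically $0$ on the predecessors of $y$ and takes value $1$ on some predecessor of $x$ --- which is exactly your product $D \times U$ --- and then verifies that the transitive closure of $\prec \cup <'_\alpha$ is irreflexive by analyzing a hypothetical finite cycle (collapsing segments of the form $x_i <'_\alpha x_{i+1} \prec x_{i+2}$, which is implicitly the upward closure of your $U$, and then deriving a contradiction from $x_0 <'_\alpha x_1 <'_\alpha x_2$, which would put $x_1$ in both $D$ and $U$). Your route isolates the same structural fact --- that $D$ is a down-set and $U = P \setminus D$ is an up-set --- but uses it to build the linearization directly by concatenating a linear extension of $\prec\restrict D^2$ below one of $\prec\restrict U^2$, rather than to certify acyclicity and then invoke the order-extension principle on the augmented relation. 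The two arguments have the same mathematical content; yours is arguably more transparent because the excluded configuration ($a \prec b$ with $a \in U$, $b \in D$) is ruled out in one line by downward closure of $D$, whereas the paper's cycle analysis has to handle the interleaving of $\prec$ and $<'_\alpha$ explicitly. Your checks of totality, transitivity, extension of $\prec$, and the target property $D \times U \subseteq\; <_\alpha$ are all complete, so there is no gap.
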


\begin{proof}
We first define a relation~$<^\prime_\alpha$ on $P\times P$ by setting $y
<^\prime_\alpha x$ iff $f_\alpha(u)=1$ for some $u\in P$ with $u\preceq x$
and $f_\alpha(z)=0$ for all $z \in P$ with $z \preceq y$. Note
that~$<^\prime_\alpha$ is irreflexive. We now only need to show that the
transitive closure of $\< \cup <^\prime_\alpha$ on~$P$ is irreflexive, since
any linearization~$<_\alpha$ of this transitive closure will be as desired.

So, for the sake of a contradiction, define $x <^{\prime\prime} y$ iff $x \<
y$ or $x <^\prime _\alpha y$, and assume that there is a finite sequence $x_0
<^{\prime\prime} x_1 <^{\prime\prime} \dots <^{\prime\prime} x_n = x_0$.
Since~$\<$ is irreflexive, we may assume, by a cyclic permutation of
the~$x_i$, that $x_0 <^\prime_\alpha x_1$. Furthermore, if $x_i
<^\prime_\alpha x_{i+1} \< x_{i+2}$, then $x_i <^\prime_\alpha x_{i+2}$ by
the definition of $<^\prime_\alpha$, and we can delete~$x_{i+1}$ from this
chain. Since $<^\prime_\alpha$ is irreflexive, we have $x_0<^\prime_\alpha
x_1<^\prime_\alpha x_2$. So we have that on the one hand, $f_\alpha(u)=1$ for
some $u\in P$ with $u\preceq x_1$ by the relation $x_0<^\prime_\alpha x_1$,
and on the other hand, $f_\alpha(u)=0$ for all $u\in P$ with $u\preceq x_1$
by the relation $x_1<^\prime_\alpha x_2$. This is a contradiction.
\end{proof}

\begin{proof}[Proof of Theorem~\ref{poset-dim}]
Assume that $x \not\preceq y$.
Let $A = \{z \in P: z \preceq y\}$, which is a
countable set, so apply Lemma~\ref{lem:func} to find a function~$f_\alpha$
such that $f_\alpha''A = \{0\}$ and $f_\alpha(x)=1$, and then apply
Lemma~\ref{lem:linear} with this function to obtain a linearization~$<_\alpha$
of~$\<$ with $y <_\alpha x$.

Thus, the at most~$\kappa$ many linearizations~$<_\alpha$ witness that the
dimension of~$\PPP$ is at most~$\kappa$.
\end{proof}

\noindent
Note that the proof of Theorem~\ref{poset-dim} really shows that if~$\lambda$
is some uncountable cardinal such that $\inva(\lambda) \geq {\kappa}^{+}$,
then the dimension of $(P, <)$ is at most $\lambda$, where $\inva(\lambda)$
is the almost disjointness number at $\lambda$.

Next, we consider the case when~$\kappa$ is possibly singular. We are able to
prove Theorem~\ref{poset-dim} also in that case if $\cf(\kappa) > \omega$.
The proof is almost the same.

\begin{thm} \label{c}
Suppose~$\kappa$ is any cardinal such that $\cf(\kappa) > \omega$ and $\PPP =
(P, \<)$ is any locally countable partial order of size~${\kappa}^{+}$.
Then~$\PPP$ has dimension at most~$\kappa$.
\end{thm}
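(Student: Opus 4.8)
The plan is to rerun the proof of Theorem~\ref{poset-dim} essentially verbatim, because almost none of it is sensitive to the regularity of~$\kappa$. Indeed, Lemma~\ref{lem:linear} and the deduction of the theorem from Lemmas~\ref{lem:func} and~\ref{lem:linear} use nothing about~$\kappa$ beyond what Lemma~\ref{lem:func} supplies, and within the proof of Lemma~\ref{lem:func}, Claims~\ref{claim:reading} and~\ref{claim:indepcoding} are pure coding arguments that only need $\lc P\rc=\kappa^+$. The single place where regularity of~$\kappa$ enters is Claim~\ref{claim:a.d.}, which produces an almost disjoint family $\seq{E}{\xi}{<}{\kappa^+}$ of subsets of~$\kappa$. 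So the whole task reduces to reproving Claim~\ref{claim:a.d.} under the weaker hypothesis $\cf(\kappa)>\omega$.

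The key observation is that Claim~\ref{claim:indepcoding} only consumes conditions~(1) and~(2), and that~(2) holds as soon as no $E_\zeta$ is covered by countably many of the other~$E_\xi$: a single point $\alpha\in E_\zeta\setminus\bigcup_{\xi\in B}E_\xi$ yields the required finite separating set $F=\{\alpha\}$. Crucially, this does \emph{not} require the $E_\xi$ to have size~$\kappa$. It suffices that they form a \emph{generalized} almost disjoint family whose members all have a common regular uncountable size~$\mu$ with pairwise intersections of size~$<\mu$: then $E_\zeta\cap\bigcup_{\xi\in B}E_\xi$ is a countable union of sets of size~$<\mu$, hence of size~$<\mu=\lc E_\zeta\rc$ since $\cf(\mu)=\mu>\omega$. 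This is exactly where $\cf(\kappa)>\omega$ must re-enter, and it tells us the natural choice is $\mu=\cf(\kappa)$.

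The remaining, and main, task is therefore to build $\kappa^+$ subsets of~$\kappa$, each of size $\mu=\cf(\kappa)$, with pairwise intersections of size~$<\mu$. I plan to realize these as graphs of pairwise eventually different functions. Fix a strictly increasing sequence $\seq{\kappa}{i}{<}{\mu}$ of regular cardinals cofinal in~$\kappa$ with $\kappa_0>\mu$; regularity of the~$\kappa_i$ forces $\kappa_i>\sup_{j<i}\kappa_j$ at every~$i$, since otherwise $\kappa_i$ would have cofinality $\cf(i)<\mu<\kappa_i$. Identifying~$\kappa$ with $\bigsqcup_{i<\mu}\kappa_i$, I construct $\seq{f}{\xi}{<}{\kappa^+}$ with $f_\xi\in\prod_{i<\mu}\kappa_i$ and $\lc\{i<\mu:f_\xi(i)=f_\eta(i)\}\rc<\mu$ for $\xi\neq\eta$, by recursion on~$\xi$. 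I first fix once and for all a map $s:\kappa\to\mu$ with $\lc\{\alpha<\kappa:s(\alpha)\leq i\}\rc<\kappa_i$ for every~$i$, obtained by sending the interval $[\sup_{j<i}\kappa_j,\sup_{j\leq i}\kappa_j)$ to~$i+1$, so that $\{\alpha:s(\alpha)\leq i\}$ is an initial segment of order type $\sup_{j<i}\kappa_j<\kappa_i$. At stage~$\xi$ I enumerate the previously built functions as $\seq{h}{\zeta}{<}{\lc\xi\rc}$ and define $f_\xi(i)$ to be any element of $\kappa_i\setminus\{h_\zeta(i):s(\zeta)\leq i\}$; this set has size~$<\kappa_i$, so the choice is possible, and then $f_\xi$ agrees with each~$h_\zeta$ only at levels $i<s(\zeta)$, a bounded (hence size-$<\mu$) subset of~$\mu$.

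The spot that needs care is precisely the feasibility of this recursion through all~$\kappa^+$ stages: one must be able, at every level~$i$, to avoid the forbidden values while only finitely (indeed boundedly) delaying the eventual-difference guarantee for each previous function. This is exactly what the counting inequality $\lc\{\zeta:s(\zeta)\leq i\}\rc<\kappa_i$ buys, and it is the one device that replaces the regular-$\kappa$ diagonalization of Claim~\ref{claim:a.d.}, which no longer applies when~$\kappa$ is singular. Transporting the graph of each~$f_\xi$ through a bijection $\bigsqcup_{i<\mu}\kappa_i\to\kappa$ yields the desired family $\seq{E}{\xi}{<}{\kappa^+}$: each $E_\xi$ has size~$\mu$, distinct $E_\xi,E_\eta$ meet in a set of size~$<\mu$, and by the second paragraph this gives conditions~(1) and~(2) of Claim~\ref{claim:indepcoding}. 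With Claim~\ref{claim:a.d.} so reestablished, the rest of the argument for Theorem~\ref{poset-dim} goes through unchanged, proving Theorem~\ref{c}.
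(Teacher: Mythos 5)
Your proof is correct, and its overall architecture coincides with the paper's: both isolate Claim~\ref{claim:a.d.} as the only step sensitive to the regularity of~$\kappa$, both replace the size-$\kappa$ almost disjoint family by a family of ${\kappa}^{+}$ subsets of~$\kappa$ of common size $\mu = \cf(\kappa)$ with pairwise intersections of size $<\mu$, and both realize that family as graphs of functions from~$\mu$ into a product of cardinals cofinal in~$\kappa$, the point in each case being that a countable union of sets of size $<\mu$ still has size $<\mu$ because $\mu$ is regular and uncountable. Where you genuinely differ is in how the family is produced. The paper invokes a PCF-theoretic black box, \cite[Theorem 2.23]{handbookpcf}, which supplies a sequence $\seq{f}{\xi}{<}{{\kappa}^{+}}$ in $\prod_{\alpha < \mu}{\lambda}_{\alpha}$ that is strictly increasing modulo bounded sets, and then uses only the much weaker consequence that distinct graphs meet in a bounded subset of~$\mu$. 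You observe that pairwise eventual difference is all the argument consumes, and you construct such a family by an elementary transfinite recursion of length ${\kappa}^{+}$, with the slow-down function~$s$ keeping the forbidden set at each level~$i$ of size $\lc \sup_{j<i}{\kappa}_{j} \rc < {\kappa}_{i}$ so that the diagonalization never stalls; your verification that $\sup_{j<i}{\kappa}_{j} < {\kappa}_{i}$ via regularity of the~${\kappa}_{i}$ and ${\kappa}_{0} > \mu$ is the one point that needed checking, and it is correct. Your route is self-contained and avoids importing a scale from PCF theory; the paper's is shorter on the page but relies on a strictly stronger structure than is actually needed. Both arguments are valid, and both feed the resulting family into Claim~\ref{claim:indepcoding} in exactly the same way.
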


\begin{proof}
The only place in the proof of Theorem~\ref{poset-dim} where the regularity
of~$\kappa$ is used is in Claim~\ref{claim:a.d.}. All of the other lemmas and
claims needed for the proof of Theorem~\ref{poset-dim} go through for any
uncountable cardinal~$\kappa$. So we assume that~$\kappa$ is a singular
cardinal with $\cf(\kappa) > \omega$. To establish the analogue of Claim
\ref{claim:a.d.}, we use a basic fact from PCF theory. Let $\mu =
\cf(\kappa)$. By hypothesis,~$\mu$ is an uncountable regular cardinal. By
\cite[Theorem 2.23]{handbookpcf}, there exist sequences
$\seq{\lambda}{\alpha}{<}{\mu}$ and $\seq{f}{\xi}{<}{{\kappa}^{+}}$
satisfying the following conditions:
\begin{enumerate}
\item for each $\alpha < \mu$, ${\lambda}_{\alpha} < \kappa$;
\item for each $\xi < {\kappa}^{+}$, ${f}_{\xi}$ is a function,
    $\dom({f}_{\xi}) = \mu$, and $\forall \alpha < \mu\[{f}_{\xi}(\alpha)
    \in {\lambda}_{\alpha}\]$;
\item for all $\xi < \zeta < {\kappa}^{+}$, ${Z}_{\xi,\zeta} \{ \alpha <
    \mu: {f}_{\xi}(\alpha) \geq {f}_{\zeta}(\alpha) \}$ is bounded
    in~$\mu$.
\end{enumerate}
Note that for each $\xi < {\kappa}^{+}$, ${f}_{\xi} \subseteq \mu \times
\kappa$. Furthermore $\lc {f}_{\xi} \rc = \mu$, and if $\zeta \neq \xi$, then
$\lc {f}_{\zeta} \cap {f}_{\xi} \rc \leq \lc \{\alpha < \mu:
{f}_{\zeta}(\alpha) = {f}_{\xi}(\alpha)\} \rc < \mu$. Therefore if $B
\subseteq {\kappa}^{+}$ is countable and $\zeta \in {\kappa}^{+} \setminus
B$, then $\lc {\bigcup}_{\xi \in B}{{f}_{\xi} \cap {f}_{\zeta}} \rc < \mu$
because $\mu > \omega$ is a regular cardinal. So we may find
$\pr{\alpha}{\delta} \in {f}_{\zeta} \setminus \left( {\bigcup}_{\xi \in
B}{{f}_{\xi} \cap {f}_{\zeta}} \right) $. As in the proof of
Claim~\ref{claim:a.d.}, let $F = \{\pr{\alpha}{\delta}\}$. $F$ is a finite
subset of $\mu \times \kappa$, and ${f}_{\zeta} \cap F =
\{\pr{\alpha}{\delta}\}$, while $\forall \xi \in B\[{f}_{\xi} \cap F =
\emptyset\]$. Thus the sequence $\seq{f}{\xi}{<}{{\kappa}^{+}}$ satisfies the
analogue of conditions~(1) and~(2) of Claim~\ref{claim:indepcoding}. Since
$\lc \mu \times \kappa \rc = \kappa$, we have shown that the hypotheses of
Claim~\ref{claim:indepcoding} are satisfied. Now the rest of the proof
follows the proof of Theorem~\ref{poset-dim}.
\end{proof}

\noindent
The following corollary is immediate from Theorems~\ref{poset-dim}
and~\ref{c}.

\begin{cor}
If the order dimension of any locally countable partial order
is~${2}^{{\aleph}_{0}}$, then either~$\CH$ holds, or ${2}^{{\aleph}_{0}}$ is
a limit cardinal, or~${2}^{{\aleph}_{0}}$ is the successor of a singular
cardinal of countable cofinality.
\end{cor}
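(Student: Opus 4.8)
The plan is to prove the contrapositive: assuming that none of the three listed conditions holds, I will show that every locally countable partial order of size $2^{\aleph_0}$ has dimension strictly below $2^{\aleph_0}$, so in particular none has dimension exactly $2^{\aleph_0}$. Throughout, write $\cc = 2^{\aleph_0}$.

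First I would unpack what the simultaneous failure of the three conditions means for the cardinal $\cc$. Since $\CH$ fails, $\cc \geq \aleph_2$. Since $\cc$ is not a limit cardinal, it is a successor, say $\cc = \kappa^+$ for some infinite cardinal~$\kappa$; combined with $\cc \geq \aleph_2$ this forces $\kappa \geq \aleph_1$, so $\kappa$ is uncountable. Finally, because $\cc = \kappa^+$ is not the successor of a singular cardinal of countable cofinality, and because the predecessor~$\kappa$ of a successor cardinal is uniquely determined,~$\kappa$ itself is not a singular cardinal of countable cofinality. As~$\kappa$ is uncountable, the only remaining possibilities are that~$\kappa$ is regular (whence $\cf(\kappa) = \kappa > \omega$) or that~$\kappa$ is singular with $\cf(\kappa) > \omega$; in either case $\cf(\kappa) > \omega$.

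These are exactly the hypotheses under which the earlier theorems apply. If~$\kappa$ is regular uncountable I would invoke Theorem~\ref{poset-dim}, and in the remaining singular case I would invoke Theorem~\ref{c}; in fact Theorem~\ref{c} already covers both, since its sole assumption on~$\kappa$ is $\cf(\kappa) > \omega$. Either way, any locally countable partial order $\PPP = (P, \<)$ with $|P| = \kappa^+ = \cc$ has dimension at most $\kappa < \kappa^+ = \cc$. (If instead $|P| < \cc$, the conclusion is immediate, since the dimension of an infinite poset never exceeds its cardinality.) Hence no locally countable partial order of size~$\cc$ can have dimension~$\cc$, which is precisely the contrapositive sought.

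The argument is essentially bookkeeping, so I do not anticipate a genuine obstacle; the only point requiring care is the translation in the second paragraph, namely verifying that negating the three clauses yields exactly ``$\cc = \kappa^+$ with $\cf(\kappa) > \omega$,'' which is the common hypothesis of Theorems~\ref{poset-dim} and~\ref{c}. The one modeling choice worth flagging is that the structures of interest here (such as the Turing degrees) have size~$\cc$, so the two theorems are applied with $\kappa^+ = \cc$; a locally countable poset of size strictly exceeding~$\cc$ lies outside the scope of those theorems and is not treated by this reduction.
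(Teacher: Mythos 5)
Your proposal is correct and takes essentially the same route as the paper, which simply observes that the corollary is immediate from Theorems~\ref{poset-dim} and~\ref{c}; your contrapositive bookkeeping (negating the three clauses to get $2^{\aleph_0}=\kappa^+$ with $\cf(\kappa)>\omega$ and then invoking those theorems) is exactly the intended argument. Your explicit flag that posets of size exceeding $2^{\aleph_0}$ fall outside the scope of the reduction is a reasonable point of care that the paper leaves implicit.
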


\noindent
Our conjecture is that the cases besides~$\CH$ are also realized. In other
words, we think that there are models where ${\mathfrak{dim}}_{T} =
{2}^{{\aleph}_{0}} = {\aleph}_{{\omega}_{1}}$, ${\mathfrak{dim}}_{T} =
{2}^{{\aleph}_{0}} = {\aleph}_{\omega+1}$, and ${\mathfrak{dim}}_{T} =
{2}^{{\aleph}_{0}}$ where~${2}^{{\aleph}_{0}}$ is weakly inaccessible.

\section{\texorpdfstring{The dimension of some degree structures\\
from computability theory}%
{The dimension of some degree structures from computability theory}}%
\label{sec:degstr}

\noindent
In this section, we state some results on the dimension of three degree
structures from computability theory, the Turing degrees, the Medvedev
degrees and the Muchnik degrees.

We start with the Turing degrees since the original motivation for our
investigation was determining the dimension of the Turing degrees under the
partial ordering, for which we obtain two partial results.

\begin{defn}
For a partial order $\PPP = (P,\<)$ and a subset~$S$ of~$P$, we say that~$S$
is a \emph{strongly independent} antichain if for any subset~$T$ of~$S$ with
$|T| < |S|$ and for any $x\in S\setminus T$, there is an upper bound $y\in P$
of~$T$ with $y \not\succeq x$.
\end{defn}

\begin{lem}\label{dim-lemma}
Let $\PPP = (P,\<)$ be a partial order with a strongly independent
antichain~$S$. Then the dimension of~$\PPP$ is at least~$|S|$.
\end{lem}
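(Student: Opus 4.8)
The plan is to argue by contradiction. Write $\lambda = |S|$ and suppose that $\PPP$ has dimension $\mu < \lambda$, witnessed by a family of linearizations $\langle {<}_\iota : \iota < \mu \rangle$ of~$\prec$ whose intersection is~$\prec$. The only feature of this family I will use is the following reformulation of ``the intersection is~$\prec$'': whenever $a, b \in P$ are distinct with $a \not\preceq b$, there is some $\iota < \mu$ with $b \: {<}_\iota \: a$. Indeed, if $a \: {<}_\iota \: b$ held for every~$\iota$, then $a \prec b$, contrary to $a \not\preceq b$. I would record this reformulation at the outset.

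The heart of the argument is to manufacture a single element $a \in S$ together with a \emph{small} test set $T \subseteq S$ to which strong independence can be applied. First I would note that, within each linear order~${<}_\iota$ restricted to~$S$, an element has nothing ${<}_\iota$-above it precisely when it is the ${<}_\iota$-greatest element of~$S$; hence for each~$\iota$ at most one element of~$S$ is ``${<}_\iota$-topmost''. Since there are only $\mu < \lambda$ linearizations, the set of elements that are ${<}_\iota$-greatest in~$S$ for some~$\iota$ has size at most~$\mu$, so I may fix an $a \in S$ lying outside it. For this~$a$ and every $\iota < \mu$, there is then some $t_\iota \in S$ with $a \: {<}_\iota \: t_\iota$; setting $T = \{ t_\iota : \iota < \mu \}$ gives a subset of~$S$ with $|T| \leq \mu < \lambda$ and $a \notin T$ (as $a \: {<}_\iota \: t_\iota$ forces $t_\iota \neq a$).

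Now I would apply strong independence to~$T$ and $x = a$, which is legitimate exactly because $|T| \leq \mu < \lambda = |S|$: this produces an upper bound $y \in P$ of~$T$ with $y \not\succeq a$, i.e.\ $a \not\preceq y$. By the reformulation above there is some~$\iota$ with $y \: {<}_\iota \: a$. But $t_\iota \in T$ gives $t_\iota \preceq y$, hence $t_\iota \: {\leq}_\iota \: y$, and together with $a \: {<}_\iota \: t_\iota$ this yields $a \: {<}_\iota \: y$, contradicting $y \: {<}_\iota \: a$. This contradiction forces $\mu \geq \lambda$, proving the lemma.

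The step I expect to be the main obstacle is precisely the transition to an infinite antichain~$S$: strong independence only furnishes upper bounds for subsets $T$ with $|T| < |S|$, so the naive choice $T = S \setminus \{a\}$ is unavailable once~$\lambda$ is infinite. The device of choosing~$a$ to be non-topmost in every~${<}_\iota$ and then covering the~$\mu$ linearizations by one witness~$t_\iota$ apiece is what keeps $|T|$ down to~$\mu$ and makes the hypothesis applicable. The remaining bookkeeping—that each~${<}_\iota$ extends~$\prec$, and the passage from $t_\iota \preceq y$ to $t_\iota \: {\leq}_\iota \: y$—is routine.
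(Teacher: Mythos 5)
Your proof is correct, and it takes a genuinely different route from the paper's. The paper also argues by contradiction, but it first passes to a subset $\hat S\subseteq S$ of successor cardinality $\kappa>\rho$ (where $\rho$ is the putative dimension), fixes $x\in\hat S$, exhausts $\hat S\setminus\{x\}$ by an increasing $\kappa$-chain of sets $T_\alpha$ of size $<\kappa$, applies strong independence to each $T_\alpha$ to obtain upper bounds $y_\alpha$ with $x\not\preceq y_\alpha$, and then uses the regularity of $\kappa$ to stabilize on a single linearization $<_\beta$ that places all of $\hat S\setminus\{x\}$ below $x$; a second pigeonhole over $x\in\hat S$ produces two distinct elements each $<_\beta$-above the other, which is absurd. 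Your argument inverts the quantifiers: rather than forcing every other element of the antichain below a fixed $x$ in some one linearization, you choose a single $a\in S$ that is not $<_\iota$-greatest in $S$ for any $\iota$ (possible because there are only $\mu<|S|$ linearizations and each has at most one greatest element of $S$), collect one witness $t_\iota$ with $a<_\iota t_\iota$ per linearization into a set $T$ with $|T|\leq\mu<|S|$, and invoke strong independence exactly once, for this single $T$. This eliminates the successor-cardinal and regularity bookkeeping and both pigeonhole steps, at the cost of nothing: every application of the hypotheses is legitimate ($|T|<|S|$ and $a\in S\setminus T$ are checked, $a\not\preceq y$ gives $a\neq y$ so some $<_\iota$ reverses the pair, and $t_\iota\preceq y$ passes to $t_\iota\leq_\iota y$ because $<_\iota$ extends $\prec$). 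Both proofs are sound; yours is shorter and more elementary.
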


\begin{proof}
We provide a proof by contradiction. Suppose that the dimension~$\rho$
of~$\PPP$ is less than~$|S|$. Let $\hat{S}\subset S$ satisfy that
$|\hat{S}|>\rho$ is a successor cardinal and let $\kappa=|\hat{S}|$. Choose
linear extensions $\{<_\alpha\}_{\alpha<\rho}$ of~$\<$ whose intersection
is~$\<$.

Fix $x\in\hat{S}$. Let $\{T_\alpha\}_{\alpha<\kappa}$ be any increasing
sequence of subsets of~$\hat{S}$ of cardinality $<|\hat{S}|$ such that
$\bigcup_{\alpha<\kappa}T_\alpha=\hat{S}\setminus\{x\}$. By the strong
independence, we can find a sequence $\{y_\alpha\}_{\alpha<\kappa}$ of upper
bounds of~$T_\alpha$'s such that for each $\alpha<\kappa$, $x \not\preceq
y_\alpha$, which means that there exists $\beta<\rho$ such that $y_\alpha
<_\beta x$ by the choice of $\{<_\beta\}_{\beta<\rho}$. Since $\rho<\kappa$
and~$\kappa$ is regular (or finite),
%STEFFEN: I think we need the regularity of kappa here, so I added that
%Kojiro: Thanks, you are right. Since we do not assume $|S|$ is regular,
%instead of using S directly, I use a subset $\hat{S}$ of $S$ which
%cardinality is regular if $S$ is not finite.
there must exist a fixed $\beta<\rho$ such that $y_\alpha <_\beta x$ holds
for unboundedly many $\alpha<\kappa$. By the choice of
$\{T_\alpha\}_{\alpha<\kappa}$, every element of~$\hat{S}$ distinct from~$x$
is in almost all $\{T_\alpha\}_{\alpha<\kappa}$, and therefore, $y <_\beta x$
must hold for each $y\in\hat{S}\setminus\{x\}$. Hence we conclude that for
any $x\in \hat{S}$, there exists $\beta<\rho$ such that $y <_\beta x$.

Since again $\rho<\kappa=|\hat{S}|$, there must exist a common $\beta<\rho$
and distinct $x_0,x_1\in\hat{S}$ such that $y <_\beta x_i$ holds for each
$y\in \hat{S}\setminus\{x_i\}$ and $i\in\{0,1\}$. But this gives us $x_0
<_\beta x_1 <_\beta x_0$, and hence $x_0 <_\beta x_0$, a contradiction.
\end{proof}

\noindent
We can now state two partial results about the dimension of the Turing
degrees as a partial order:

\begin{prop}[Higuchi] \label{prop:higuchi}
The dimension of the Turing degrees is uncountable.
\end{prop}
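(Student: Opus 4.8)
The plan is to deduce the proposition from Lemma~\ref{dim-lemma}: it suffices to exhibit a strongly independent antichain $S \subseteq \mathcal{D}$ with $|S| = \aleph_1$, since then ${\mathfrak{dim}}_{T} \geq |S| = \aleph_1 > \aleph_0$. The antichain will consist of the Turing degrees of $\aleph_1$ reals that are mutually independent in the strong sense that no one of them is computed by the effective join of countably many of the others. Granting such a family $\langle A_\alpha : \alpha < \omega_1\rangle$, set $S = \{\deg_{T}(A_\alpha) : \alpha < \omega_1\}$. To check strong independence, let $T \subseteq S$ with $|T| < |S| = \aleph_1$, so $T$ is countable, say $T = \{\deg_{T}(A_\beta) : \beta \in J\}$ for a countable $J \subseteq \omega_1$, and let $x = \deg_{T}(A_\alpha) \in S \setminus T$, so $\alpha \notin J$. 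The degree $y = \deg_{T}(\bigoplus_{\beta \in J} A_\beta)$ of the countable effective join is an upper bound of $T$, and the independence property gives $A_\alpha \not\le_{T} \bigoplus_{\beta \in J} A_\beta$, i.e.\ $y \not\geq x$. (When $T = \emptyset$ one may take $y = \deg_{T}(A_{\alpha'})$ for any $\alpha' \neq \alpha$, and the finite case is a special instance of the countable one; the same singleton instances show $S$ is an antichain.) Thus $S$ is a strongly independent antichain of size $\aleph_1$, and Lemma~\ref{dim-lemma} finishes the argument.

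It remains to construct the family, and here I would use algorithmic randomness. Build $\langle A_\alpha : \alpha<\omega_1\rangle$ by transfinite recursion so that for every $\alpha$, the real $A_\alpha$ is Martin--L\"of random relative to $C_\alpha := \bigoplus_{\beta<\alpha} A_\beta$. This is possible because $\alpha<\omega_1$ is countable, so $C_\alpha$ is the effective join of countably many reals, hence a single real; and since the reals that are Martin--L\"of random relative to a fixed oracle form a set of measure one, there is always a fresh $A_\alpha$ random over $C_\alpha$ available at each stage. This single "random over the predecessors" requirement is the only thing imposed during the recursion.

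The crux is that this requirement already forces the independence property for every countable subfamily, even though members with larger index are constructed later. Fix a countable $J \subseteq \omega_1$ and $\alpha \notin J$, and put $K = J \cup \{\alpha\}$. Enumerate $K$ in increasing order as $\beta_0 < \beta_1 < \cdots$; then each $A_{\beta_n}$ is random relative to $C_{\beta_n}$, and since $\bigoplus_{m<n} A_{\beta_m} \le_{T} C_{\beta_n}$ and relative randomness is preserved on passing to a weaker oracle, each $A_{\beta_n}$ is random relative to the join of its predecessors in the enumeration. By van Lambalgen's theorem in its form for countable sequences, $\bigoplus_{\beta \in K} A_\beta$ is therefore Martin--L\"of random. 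Applying the two-part van Lambalgen theorem once more to the split $\bigoplus_{\beta\in K}A_\beta = A_\alpha \oplus \bigoplus_{\beta\in J}A_\beta$ yields that $A_\alpha$ is random relative to $\bigoplus_{\beta\in J}A_\beta$; as a real that is random relative to an oracle is never computable from that oracle, we conclude $A_\alpha \not\le_{T} \bigoplus_{\beta\in J}A_\beta$, exactly as required.

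The main obstacle is precisely this last transfer: a naive recursion controls only the reals constructed below each $A_\alpha$, whereas strong independence demands that no $A_\alpha$ be captured by a join that may involve reals constructed much later. The device that overcomes it is the symmetry built into van Lambalgen's theorem, which promotes "$A_\alpha$ is random over its predecessors" to "$A_\alpha$ is random over the join of any countable set of other members," irrespective of their position in the construction; the one point demanding care is the countable-sequence version of van Lambalgen's theorem used to establish randomness of $\bigoplus_{\beta\in K}A_\beta$ from the staircase condition. (One could run the same scheme with mutual $1$-genericity in place of randomness, but the clean symmetric statement of van Lambalgen's theorem makes the randomness route the most transparent.)
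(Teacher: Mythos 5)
Your high-level plan coincides with the paper's---exhibit a strongly independent antichain of size $\aleph_1$ and apply Lemma~\ref{dim-lemma}---but you realize the antichain by a genuinely different route. The paper does no computability theory at all: it writes down an explicit locally countable poset of size $\aleph_1$ (the singletons $\{\alpha\}$ together with the sets $\{\gamma:\gamma<\beta\}\setminus\{\alpha\}$, ordered by inclusion), observes that the singletons form a strongly independent antichain because the required upper bounds are already present as elements of the poset, and then invokes Sacks's theorem~\cite{Sacks} that every locally countable partial order of size $\aleph_1$ embeds into the Turing degrees; since the dimension of a suborder is at most that of the ambient order, Lemma~\ref{dim-lemma} applied to the image finishes the proof. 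Your approach instead builds the antichain inside $\mathcal{D}$ directly from an $\omega_1$-sequence of relatively Martin--L\"of random reals. This is a legitimate alternative and buys a self-contained construction (no embedding theorem as a black box), but at the price of importing nontrivial randomness theory where the paper needs only a one-line combinatorial verification.

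There is, however, a genuine gap at the step where you conclude that $\bigoplus_{\beta\in K}A_\beta$ is random for an arbitrary countable $K\subseteq\omega_1$. You enumerate $K$ ``in increasing order as $\beta_0<\beta_1<\cdots$,'' but the increasing enumeration of a countable subset of $\omega_1$ can have any countable order type, so the indices $n<\omega$ need not exhaust $K$; and if you instead use an arbitrary order-type-$\omega$ enumeration, the staircase hypothesis fails to follow from the construction, since the predecessors of $e(n)$ in the enumeration need not all precede $e(n)$ as ordinals, so their join need not reduce to $C_{e(n)}$. Moreover, the ``countable-sequence version of van Lambalgen's theorem'' you invoke is itself a nontrivial result (for Martin--L\"of randomness it is due to Miyabe; it fails for some other randomness notions) and is only stated for $\omega$-sequences, so it does not directly apply to a transfinite staircase. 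The repair is real work: one must prove by induction on $\mathrm{otp}(K)$ that the join over $K$ (with respect to a fixed enumeration, since countable joins in $\mathcal{D}$ are not independent of the indexing) is random, using the two-variable van Lambalgen theorem at successor stages and, at countable limit stages, decomposing $K$ into an $\omega$-chain of initial segments and applying the $\omega$-version to the resulting blocks. Once that lemma is in place, your final split $\bigoplus_{\beta\in K}A_\beta\equiv_T A_\alpha\oplus\bigoplus_{\beta\in J}A_\beta$ and the verification of strong independence are correct, but as written the argument does not yet establish the key randomness claim.
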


\begin{proof}
By Sacks~\cite{Sacks}, every locally countable partial order of
cardinality~$\aleph_1$ is embeddable into the Turing degrees. Thus it is
enough to find such a partial order whose dimension is at least~$\aleph_1$.
Let us consider the suborder $(P, \subsetneq)$ of $\Pset({\aleph}_{1})$ under
set inclusion whose underlying set is
$$
P=\{\{\alpha\}:\alpha<\omega_1\}\cup
  \{\{\gamma:\gamma<\beta\}\setminus\{\alpha\}:\alpha,\beta<\omega_1\}.
$$
It is easy to see that $\{\{\alpha\}:\alpha<\omega_1\}$ is a strongly
independent antichain of cardinality~$\aleph_1$ in the partial order $(P,
\subsetneq)$, and therefore, the dimension of $(P, \subsetneq)$ is at
least~$\aleph_1$ by Lemma~\ref{dim-lemma}.
\end{proof}

\begin{thm}
It is consistent with ZFC that the dimension of the Turing degrees is
strictly less than the continuum. More precisely, the dimension of the Turing
degrees can be continuum only if either~$\CH$ holds, or ${2}^{{\aleph}_{0}}$
is a limit cardinal, or~${2}^{{\aleph}_{0}}$ is the successor of a singular
cardinal of countable cofinality.
\end{thm}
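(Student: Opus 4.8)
The plan is to obtain this theorem as a direct application of Theorems~\ref{poset-dim} and~\ref{c} to the partial order $\pr{\mathcal{D}}{{<}_{T}}$, once two standard computability-theoretic facts are recorded. First, $\pr{\mathcal{D}}{{<}_{T}}$ is locally countable: for any real~$A$, the reals Turing-reducible to~$A$ are exactly the $\Phi_e^A$ for $e < \omega$, so at most countably many degrees lie below any given degree. Second, $\lc \mathcal{D} \rc = 2^{\aleph_0}$: each degree is a countable equivalence class, and there are $2^{\aleph_0}$ reals. Thus $\pr{\mathcal{D}}{{<}_{T}}$ is a locally countable partial order of cardinality~$2^{\aleph_0}$ whose order dimension is, by Definition~\ref{def:diminv}, exactly~${\mathfrak{dim}}_{T}$.

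For the ``more precisely'' assertion I would argue contrapositively. Assume ${\mathfrak{dim}}_{T} = 2^{\aleph_0}$ and that none of the three listed cases holds. Then $\CH$ fails, $2^{\aleph_0}$ is a successor cardinal, and $2^{\aleph_0}$ is not the successor of a singular cardinal of countable cofinality. Writing $2^{\aleph_0} = {\kappa}^{+}$, the failure of~$\CH$ gives ${\kappa}^{+} > \aleph_1$, hence $\kappa \geq \aleph_1$ is uncountable; and the failure of the third case means $\kappa$ is not singular of countable cofinality, i.e.\ $\kappa$ is regular or $\cf(\kappa) > \omega$. When $\kappa$ is regular and uncountable, Theorem~\ref{poset-dim} applies to $\pr{\mathcal{D}}{{<}_{T}}$; when $\kappa$ is singular with $\cf(\kappa) > \omega$, Theorem~\ref{c} applies. (In fact Theorem~\ref{c} alone suffices, since an uncountable regular cardinal satisfies $\cf(\kappa) = \kappa > \omega$.) In either case we conclude ${\mathfrak{dim}}_{T} \leq \kappa < {\kappa}^{+} = 2^{\aleph_0}$, contradicting ${\mathfrak{dim}}_{T} = 2^{\aleph_0}$.

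For the consistency assertion it suffices to produce one model in which ${\mathfrak{dim}}_{T} < 2^{\aleph_0}$, and I would take any model of $2^{\aleph_0} = \aleph_2$ --- for instance the cardinal-preserving extension obtained by adding $\aleph_2$ Cohen reals to a model of $\mathrm{GCH}$. There $2^{\aleph_0} = \aleph_2 = {\aleph_1}^{+}$ with $\kappa = \aleph_1$ regular and uncountable, so Theorem~\ref{poset-dim} yields ${\mathfrak{dim}}_{T} \leq \aleph_1 < \aleph_2 = 2^{\aleph_0}$, as required.

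Since the whole deduction amounts to matching the cardinal arithmetic of $2^{\aleph_0}$ to the hypotheses of the two earlier theorems, there is no substantial obstacle in the argument itself. The only points demanding care are checking that the case split on $\kappa$ (where $2^{\aleph_0} = {\kappa}^{+}$) excludes exactly the three stated exceptions, and explicitly invoking local countability together with the computation $\lc \mathcal{D} \rc = 2^{\aleph_0}$ so that Theorems~\ref{poset-dim} and~\ref{c} are genuinely applicable to $\pr{\mathcal{D}}{{<}_{T}}$.
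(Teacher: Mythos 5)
Your proposal is correct and follows essentially the same route as the paper: the paper likewise obtains the consistency claim by working in a model of $2^{\aleph_0}=\aleph_2$ and applying Theorem~\ref{poset-dim} with $\kappa=\aleph_1$, and obtains the trichotomy by combining Theorems~\ref{poset-dim} and~\ref{c}. Your write-up merely makes explicit the routine verifications (local countability of $\pr{\mathcal{D}}{{<}_{T}}$, $\lc\mathcal{D}\rc=2^{\aleph_0}$, and the case split on $\cf(\kappa)$) that the paper leaves implicit.
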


\begin{proof}
The first part is a direct corollary of Theorems~\ref{poset-dim}: Work in a
model in which $2^{\aleph_0} = \aleph_2$ and apply the theorem with $\kappa =
\aleph_1$. The second part combines Theorems~\ref{poset-dim} and~\ref{c}.
\end{proof}

\noindent
We now turn our attention to the Medvedev and the Muchnik degrees. By a
cardinality argument, the dimension of both can be at
most~$2^{2^{\aleph_0}}$.

\begin{thm}[Pouzet 1969]
Let $\PPP = (P,\<)$ be a partial order. Then the dimension of
$(\IniSeg(\PPP),\subset))$ is the chain covering number of~$\PPP$, where
$\IniSeg(\PPP)$ is the set of initial segments of~$\PPP$ and the \emph{chain
covering number} of~$\PPP$ is the least cardinal~$\kappa$ such that there
exists a set~$\CCC$ of chains of~$\PPP$ with $|C| = \kappa$ and $\bigcup \CCC
= P$.
\end{thm}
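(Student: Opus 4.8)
The plan is to prove the two inequalities $\dim(\IniSeg(\PPP),\subseteq)\le\kappa$ and $\dim(\IniSeg(\PPP),\subseteq)\ge\kappa$ separately, where $\kappa$ denotes the chain covering number of $\PPP$. For $x\in P$ I would write $I_x=\{z\in P:z\preceq x\}$ and $D_x=\{z\in P:z\prec x\}$; these are initial segments with $D_x\subsetneq I_x$ and $I_x\setminus D_x=\{x\}$. The basic dictionary to record first is $I_x\subseteq I_y\iff x\preceq y$ and $I_x\subseteq D_y\iff x\prec y$. In particular $x\mapsto I_x$ is an order embedding of $\PPP$ into $(\IniSeg(\PPP),\subseteq)$, and $D_x$ is the unique lower cover of the principal initial segment $I_x$.

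For the upper bound I would fix a covering $P=\bigcup_{\alpha<\kappa}C_\alpha$ of $P$ by $\kappa$ chains. The crucial point is that the initial segments of a chain $C_\alpha$ are linearly ordered by inclusion, so $I\mapsto I\cap C_\alpha$ maps $(\IniSeg(\PPP),\subseteq)$ monotonically into a linear order. The combined map $I\mapsto(I\cap C_\alpha)_{\alpha<\kappa}$ is injective and order reflecting, because any $p\in I\setminus J$ lies in some $C_\alpha$, forcing $J\cap C_\alpha\subsetneq I\cap C_\alpha$. Breaking ties in each coordinate by one fixed linear extension of $\subseteq$ promotes each coordinate to an honest linear extension $<_\alpha$ of $\subseteq$, and the same witness $p$ shows $\bigcap_{\alpha<\kappa}{<_\alpha}={\subseteq}$. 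Hence $\kappa$ linear extensions realize the order, giving $\dim\le\kappa$.

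For the lower bound I would start from an arbitrary realizer $\{<_\alpha:\alpha<\rho\}$ with $\rho=\dim(\IniSeg(\PPP),\subseteq)$ and try to extract a chain covering of size $\rho$. The natural device is, for each $\alpha$, the relation $x\lessdot_\alpha y$ defined by $I_x\le_\alpha D_y$. Since $<_\alpha$ extends $\subseteq$ we have $D_z<_\alpha I_z$, from which one checks that each $\lessdot_\alpha$ is a strict partial order on $P$; and the defining property of a realizer yields the clean identity $x\prec y\iff\forall\alpha\,(x\lessdot_\alpha y)$, that is, the order of $\PPP$ equals $\bigcap_{\alpha<\rho}\lessdot_\alpha$. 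Geometrically, $x$ and $y$ are incomparable precisely when the jumps $[D_x,I_x]$ and $[D_y,I_y]$ fail to be consistently nested across the coordinates.

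The hard part will be turning this representation into an assignment of each $x\in P$ to one of $\rho$ chains so that same-chain elements are comparable. A naive local rule --- colouring $x$ by a coordinate in which $D_x<_\alpha I_x$ is strict --- provably fails: for an arbitrary realizer two incomparable elements can have nontrivial jumps in exactly the same set of coordinates, and the relative order of two jumps can reverse from coordinate to coordinate, so no single $\lessdot_\alpha$ resembles $\prec$. The correct argument must use the full lattice structure of $\IniSeg(\PPP)$ --- the joins and meets $I_x\cup I_y$, $I_x\cap I_y$ of the principal segments, all of which the realizer must also respect --- and not the interval orders $\lessdot_\alpha$ in isolation; indeed intersections of interval orders by themselves do not bound the chain covering number (an antichain is a single interval order yet needs many chains). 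I would therefore build the cover by a transfinite recursion along an enumeration of $P$, at each stage placing the new point into a chain and using the identity $\prec=\bigcap_\alpha\lessdot_\alpha$ together with the meet/join relations to verify that no incomparability is introduced within a class. This verification is the technical heart of Pouzet's theorem and is where essentially all the difficulty lies; in the write-up one may alternatively simply cite Pouzet's result.
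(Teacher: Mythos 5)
Your upper bound is correct and complete: intersecting an initial segment with each chain of a cover, using that the initial segments of a chain are linearly ordered by inclusion, and breaking ties by a single fixed linear extension of $\subseteq$ does produce $\kappa$ linear extensions of $(\IniSeg(\PPP),\subseteq)$ whose intersection is $\subseteq$. (For the record, the paper states this theorem without proof, citing Pouzet, so there is no in-paper argument to compare against.)

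The lower bound is a genuine gap. You set up the relations $I_x\le_\alpha D_y$, correctly observe that the naive colouring fails, and then defer the whole argument to an unspecified transfinite recursion, ending with the suggestion to ``simply cite Pouzet''; that is not a proof. Your diagnosis that this direction is the hard technical heart is also off the mark: the missing ingredient is simply a better choice of initial segment. Given a realizer $\{<_\alpha:\alpha<\rho\}$ of $(\IniSeg(\PPP),\subseteq)$, for each $x\in P$ put $U_x=\{z\in P: x\not\preceq z\}$. This is an initial segment (if $z'\preceq z$ and $x\preceq z'$, then $x\preceq z$), and $x\in I_x\setminus U_x$, so $I_x\not\subseteq U_x$; since the $<_\alpha$ intersect to $\subseteq$, there is some $\alpha$ with $U_x<_\alpha I_x$, and you colour $x$ by one such $\alpha$. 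If $x$ and $y$ are incomparable and get the same colour $\alpha$, then $I_y\subseteq U_x$ (any $z\preceq y$ with $x\preceq z$ would give $x\preceq y$), hence $I_y\le_\alpha U_x<_\alpha I_x$, and symmetrically $I_x<_\alpha I_y$, a contradiction. So each colour class is a chain and $P$ is covered by at most $\rho$ chains, giving the lower bound in a few lines and making the transfinite recursion, the meet/join analysis, and the citation all unnecessary.
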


\noindent
It is known that the Muchnik degrees are isomorphic to the set of all final
segments of the Turing degrees ordered by~$\supset$. Note that the dimension
of a partial order does not change if we reverse the order. Thus the
dimension of the Muchnik degrees is the chain covering number of the Turing
degrees, which is~$2^{\aleph_0}$ since there are at most~$2^{\aleph_0}$ many
Turing degrees and the Turing degrees contain an antichain of
size~$2^{\aleph_0}$. Since the Muchnik degrees can be seen a suborder of the
Medvedev degrees, the dimension of the latter is at least~$2^{\aleph_0}$, and
by a cardinality argument at most~$2^{2^{\aleph_0}}$.

We can thus determine the dimension of the Muchnik degrees in ZFC but leave
open the following questions:

\begin{ques}
\begin{enumerate}
\item Does ZFC determine the dimension of the Turing degrees?
%DILIP: We show that ZFC does not determine the dimension of the Turing
%degrees, right?
%STEFFEN: I don't quite think so: It could always be aleph1 from what we know,
%right?
\item Does ZFC determine the dimension of the Medvedev degrees?
\end{enumerate}
\end{ques}

\section{Some examples}

\noindent
One of the goals of our paper was to find out whether the order dimension of
the structure of Turing degrees is~$\aleph_1$ in all models of ZFC. More
generally, we can pose the following

\begin{ques}
Is there a partially ordered set $(F,<)$ such that, in all models of ZFC, the
following holds?
\begin{enumerate}
\item The order~$<$ is locally countable;
\item for every at most countable subset $G \subseteq F$ there is an upper
    bound~$x$ of~$G$, i.e., $y \le x$ for all $y \in F$;
\item the cardinality of~$F$ is~$2^{\aleph_0}$; and
\item the order dimension of $(F,<)$ is~$\aleph_1$.
\end{enumerate}
\end{ques}

\noindent
Each of the examples below will satisfy three of the properties but the
fourth at most partially. Note that additional set-theoretical assumptions
like $2^{\aleph_0} \in \{\aleph_1,\aleph_2\}$ might make the examples have
all the desired properties.
%STEFFEN4: I reversed to the usual set theoretical notation of omega_1 instead
%of bf M and numbered the conjecture and all the examples automatically.

%STEFFEN4: The below needs to be uncommented if we decide to include the last
%examples.
%Example~\ref{ex4} will also show that Theorem~\ref{poset-dim} really needs
%the assumption that~$\kappa$ is uncountable in all models of ZFC.

%STEFFEN4: I feel that the last two examples would make the paper longer than
%necessary and propose we cut them?

\begin{example}\label{ex1}
Let~$F$ be the set of all hereditarily countable sets and let~$<$ be the
transitive closure of the element-relation. As every set bounds only
countably many other sets in a hereditary way, the partial order is locally
at most countable, and it is well-founded. Furthermore, every at most
countable ordinal~$\alpha$ can be identified with the at most countable set
$\{\beta: \beta < \alpha\}$, and these sets are hereditarily countable. The
set $\{\{\{\alpha\}\}: \alpha$ is an at most countable ordinal$\}$ is an
antichain of size~$\aleph_1$ consisting of hereditarily countable sets; as
each of its at most countable subsets is hereditarily countable, these sets
witness that the antichain is indeed a strong antichain and that therefore
the order dimension is at least $\aleph_1$. It is known that the set of
hereditarily countable sets has cardinality~$2^{\aleph_0}$.
\end{example}

\begin{example}\label{ex2}
%Let a word~$f$ be a function from some at most countable downward closed
%subset of~$\omega_1$ to~$\omega_1$; the domain-height of a word~$f$ is a
%ordinal~$\alpha$ such that $f(\beta)$ is defined iff $\beta < \alpha$.
%STEFFEN4: Since this is primarily a set theory paper, I set theorized the
%notation from its CS roots here.
Let~$F$ be the set of all functions~$f$ from an ordinal $\alpha < \omega_1$
into~$\omega_1$; order~$F$ by letting $f < g$ iff there are ordinals
$\alpha,\beta \in \dom(g)$ such that for all $\gamma \in \dom(f)$,
%STEFFEN4: Frank, this would imply that dom(f) = dom(g) and is a limit
%ordinal or 0. Is that what you wanted?
$f(\gamma) = g(\alpha+1+\gamma)$, $g(\alpha) = g(\beta)$, $f(\gamma) <
g(\alpha)$ and $\alpha+1+\dom(f) = \beta$. It is easy to see that~$<$ is
transitive and locally countable. Furthermore, one can easily see that for at
most countably many functions $f_0, f_1, \ldots$, there is a common upper
bound~$g$ by choosing an ordinal~$\alpha$ strictly larger than all ordinals
occurring in the~$f_k$, considering an $\omega$-power $\omega^\gamma$ larger
than the domains of all~$f_k$, letting the domain of $h$ be
$\omega^{\gamma+1}$, and setting $h(\omega^\gamma \cdot k+1+\gamma) =
f_k(\gamma)$ for all $\gamma \in \dom(f_k)$ and $h(\delta) = \alpha$ for
all~$\delta$ in the domain of~$h$ where~$h$ is not yet defined. So every at
most countable set~$G$ of members of~$F$ has a common upper bound, which
strictly bounds from above exactly the members of~$G$ and those members
of~$F$ which are below a member of~$G$. The cardinality of~$F$
is~$2^{\aleph_0}$. Furthermore, the set of all functions with domain~$\{0\}$
forms a strong antichain and therefore, the order dimension is at
least~$\aleph_1$.
\end{example}

\begin{example}\label{ex3}
Let~$F$ be the set of all subsets of~${\mathbb R} \times {\omega_1}$ such
that~$F$ is the union of finitely many sets of the form $A_{x,y,z} = \{x\}
\times \{u \in M: y \leq u \leq z\}$ and $B_{x,y,z} = \{x\} \times \{u \in M:
y \leq u < z\}$, and order this set by set inclusion. The set $(F,<)$
satisfies all four conditions except for the second, which is weakened to the
existence of common upper bounds of finitely many elements.

One can see from the definition that every set of the form $A_{x,y,z}$ or
$B_{x,y,z}$ has at most countably many subsets of this form in~$F$;
furthermore each member of~$F$ is countable. Thus this is a locally countable
partially ordered set.

Furthermore, as the finite union of any members of~$F$ is again a member of
$F$, one has also that finitely many subsets have an upper bound. However,
this does not extend to all countable subsets of~$F$.

The cardinality of~$F$ is~$2^{\aleph_0}$. The lower bound is seen by looking
at all sets $A_{x,0,0}$ with $x \in \mathbb R$. The upper bound stems from
the fact that there are~$2^{\aleph_0}$ many countable subsets of~$\mathbb R$.

Furthermore, let $p,q$ be rational numbers with $p<q$ and $y \in {\omega_1}$.
For each $p,q,y$, one defines a linear order $<_{p,q,y}$ as a linear
extension of the partial order defined by $A <_{p,q,y} B$ iff~$B$ has more
elements than~$A$ of the form $(x,y)$ with $p \leq x \leq q$; note that each
set has only finitely many such elements. Now if $B \not\subseteq A$ then
there is an $(x,y) \in B \setminus A$ and there are rationals $p,q$ such that
$x$ is the unique real number~$z$ with $p \leq z \leq q$ and $(z,y) \in A
\cup B$. It follows that $A <_{p,q,y} B$. So~$<$ is the intersection of all
$<_{p,q,y}$ with $p,q \in \mathbb Q$ and $y \in \omega_1$ and $p<q$. It
follows that the order dimension of $(F,<)$ is at most~$\aleph_1$.

Now consider the set~$C$ of all $A_{0,y,y}$, which are all singletons. The
set $C$ forms a strong antichain, as for every $A_{0,x,x}$ and every at most
countable set~$D$ of sets of the form $A_{0,y,y}$ with $y \neq x$, there is
an upper bound~$z$ of all these~$y$. For this upper bound~$z$, now consider
the set $E = B_{0,0,x} \cup A_{0,x+1,z}$, which is a superset of all
$A_{0,y,y} \in D$ but not a superset of $A_{0,x,x}$. Now by
Lemma~\ref{dim-lemma}, $(F,<)$ has order dimension at least~$\aleph_1$, as
this is the cardinality of~$C$.
\end{example}

%STEFFEN4: I feel that the last two examples would make the paper longer than
%necessary and propose we cut them?
%Following popular demand, I restored the last two examples in abbreviated
%form

\begin{example}\label{ex4}
Given~$F$ as in Example~\ref{ex3}, the subset $G = \{A \in F: A \subseteq
\{0\} \times {\omega_1}\}$ satisfies the first, second and last property, but
differs from the third in all models of set theory where $\aleph_1 \neq
2^{\aleph_0}$.
\end{example}

%The proof of Example~\ref{ex3} showing that $(F,<)$ has
%order-dimension~$\aleph_1$ transfers to Example~\ref{ex4}, as the
%order-dimension of a subset is bounded by the order-dimension of the set and
%as the strong antichain plus its witnessed to prove that lower bound on the
%order-dimension are all members of~$G$. The cardinality of $(G,<)$
%is~$\aleph_1$, as every set in~$G$ is the finite union of sets $A_{0,x,y}$ or
%$B_{0,x,y}$ and $x,y \in \omega_1$, so one needs only finitely many countable
%ordinals to describe the members of~$G$. As the set $(F,<)$ is locally
%countable, so is the subset $(G,<)$. Furthermore, for any countable subset
%of~$G$ there is an upper bound~$z$ on all ordinals appearing there and then
%$A_{0,0,z}$ is an upper bound on all the members of the countable subset,
%thus the property that every countable subset of~$G$ has an upper bound is
%satisfied.

\begin{example}\label{ex5}
The set~$F$ of all countable subsets of~$\omega_1$ with the order of
inclusion satisfies the property that every countable subset of~$F$ has an
upper bound in~$F$ and the cardinality is~$2^{\aleph_0}$. Furthermore,
$(F,<)$ has order-dimension~$\aleph_1$.
\end{example}

%: First one sees that the order~$<$ is
%the intersection of all orders~$<_x$ with $x \in \omega_1$ where~$<_x$ is any
%linear order putting all sets containing~$x$ above all sets not
%containing~$x$; Second one sees that the singleton subsets of~$\omega_1$ for
%a strong antichain of size~$\aleph_1$ and therefore the order-dimension is
%exactly~$\aleph_1$. However this partially ordered set is not locally
%countable.

\end{document}